\documentclass[11pt]{amsart}
\usepackage[all]{xy}
\usepackage{amscd}
\usepackage{amsfonts}
\usepackage{amsmath}
\usepackage{amssymb}
\usepackage{amsthm}
\usepackage{color}
\usepackage[center]{caption2} 
\usepackage{dsfont}
\usepackage{enumerate}
\usepackage{epic}
\usepackage{float}
\usepackage{geometry}
\usepackage{longtable}

\usepackage{graphicx}
\usepackage{graphics}
\usepackage{hyperref}
\usepackage{indentfirst}
\usepackage{mathrsfs}
\usepackage{multirow}
\usepackage[numbers,sort&compress]{natbib}
\usepackage{pgf}
\allowdisplaybreaks
\hyphenpenalty=500
\tolerance=300%these two lines set up hyphenation
% THEOREMS
\numberwithin{equation}{section}
\theoremstyle{plain}
\newtheorem{prop}{Proposition}[section]
\newtheorem{coro}[prop]{Corollary}

\newtheorem{lem}[prop]{Lemma}

\newtheorem{thm}[prop]{Theorem}

\theoremstyle{definition}
\newtheorem{defi}[prop]{Definition}

\newtheorem{exam}[prop]{Example}

\allowdisplaybreaks
\hyphenpenalty=500
\tolerance=300%these two lines set up hyphenation
% MY MACROS of letters
% It is better to use \xx for x, and \XXX for X, otherwise, not easy to change x but not X.
\renewcommand\aa{a}
\newcommand\bb{b}

\newcommand\cc{c}

\newcommand\dd{d}
\newcommand\hh{h}

\setlength{\topmargin}{0mm}
\setlength{\textwidth}{170mm}  %!!
\setlength{\textheight}{200mm}
\setlength{\evensidemargin}{0mm}
\setlength{\oddsidemargin}{0mm}
\newcommand\HS[1]{\leavevmode\null\hspace{#1mm}}
\newcommand\Id[1]{\mathsf{Id}(#1)}% ideal

\newcommand\ii{i}

\newcounter{ITEM}
\newcommand\ITEM[1]{\setcounter{ITEM}{#1}\leavevmode\hbox{\rm(\roman{ITEM})}}

\newcommand\mm{m}
\newcommand\mA{\mathcal{A}}
\newcommand\mB{\mathcal{B}}

\newcommand\nn{n}

\newcommand\pp{p}
\newcommand\qq{q}

 \newcommand\Span{\mathsf{span}}

\newcommand\wdots{, ...\HS{0.2}, }
\newcommand\xx{x}

\newcommand\yy{y}
\newcommand\zz{z}

% paper begins here
\title{Products of commutator ideals of   some Lie-admissible algebras} \thanks{
The work is  supported by 
FCT   UIDB/00212/2020 and UIDP/00212/2020;  by   ``Táýelsizdik urpaqtary" MISD RK} 

\author{Ivan Kaygorodov}
\address{I.K., CMA-UBI, Universidade da Beira Interior, Covilhã, Portugal}
\email{\small kaygorodov.ivan@gmail.com}

\author{Farukh  Mashurov}
\address{F.M., Suleyman  Demirel  University,  Kaskelen,  Kazakhstan; Kazakh-British Technical University, Almaty, Kazakhstan}
\email{\small farukh.mashurov@sdu.edu.kz}

\author{Tran Giang Nam}
\address{T. G. N., Institute of Mathematics, VAST, 18 Hoang Quoc Viet, Cau Giay, Hanoi, Vietnam}
\email{\small tgnam@math.ac.vn}

\author{Zerui Zhang$^{*}$}
\address{Z.Z., School of Mathematical Sciences, South China Normal University, Guangzhou 510631, P. R. China}
\email{\small zeruizhang@scnu.edu.cn}

\keywords{Novikov algebra; bicommutative algebra; assosymmetric algebra; Lie nilpotent algebra}
\thanks{AMS 2020 {Subject Classification}.   17A30, 17B30, 17D25}

%17A30 Algebras satisfying other identities
%17B30 Solvable, nilpotent (super)algebras
%17D25 Lie-admissible algebras
\thanks{${}^{*}$ Corresponding author}

%%%%%%%%%%%%%
\begin{document}

\begin{abstract}
  In this article, we mainly study the products of commutator ideals of Lie-admissible algebras such as Novikov algebras, bicommutative algebras, and assosymmetric algebras. More precisely, we first study the properties of the lower central chains for Novikov algebras and bicommutative algebras. Then we show that for every Lie nilpotent Novikov algebra or Lie nilpotent bicommutative algebra~$\mathcal{A}$, the ideal of~$\mathcal{A}$ generated by the set~$\{ab - ba\mid a, b\in \mathcal{A}\}$ is nilpotent. Finally, we study properties of the lower central chains for assosymmetric algebras, study the products of commutator ideals of assosymmetric algebras and show that the products of commutator ideals have a similar property as that for associative algebras.
\end{abstract}
\maketitle
%%%%
\section{Introduction}
It is well-known that for an arbitrary algebra~$\mA$, one can always define a multiplication by $[\xx, \yy] = \xx\yy -\yy\xx$, where the juxtaposition denotes multiplication in~$\mA$. And~$\mA$  is called a \emph{Lie-admissible} algebra~\cite{Albert} if~$(\mA, [-,-])$  is a Lie algebra. In this case, we call $(\mA, [-,-])$ the {\it associated Lie algebra}   of  $\mA$.  It is well-known that associative algebras, Novikov algebras, bicommutative algebras and assosymmetric algebras  are Lie-admissible.   We recall that an algebra $\rm B$ over a field $\mathbb F$ is called \emph{bicommutative}~\cite{DIT11} (also known as  \emph{${\rm LR}$-algebras} \cite{bd09}) if it satisfies the identities
\begin{eqnarray}\label{leftcom}
x(yz)&=&y(xz)\ \ \ \ \mbox{(left commutativity)}
\end{eqnarray}
and
\begin{eqnarray}\label{rightcom}
(xy)z&=&(xz)y\ \ \ \ \mbox{(right commutativity)}
\end{eqnarray}
for all $x,y,z\in \rm B$; 
an algebra~$\rm{A}$ over a field $\mathbb F$ is called \emph{assosymmetric}~\cite{Kl57}  if it satisfies the identities
\begin{eqnarray}\label{leftsym}
(xy)z-x(yz)&=&(yx)z-y(xz) \ \ \ \ \mbox{(left symmetry)}
\end{eqnarray}
and
\begin{eqnarray}\label{rightsym}
(xy)z-x(yz)&=&(xz)y-x(zy)\ \ \ \ \mbox{(right symmetry)}
\end{eqnarray}
for all $x,y,z\in \rm A$; and an algebra~$\rm N$ over a field $\mathbb F$ is called a (left) \emph{Novikov algebra}~\cite{GD79,BN85}  if it satisfies the identities~\eqref{rightcom} and~\eqref{leftsym}.

A natural and interesting problem is to determine the structure of a Lie-admissible algebra when its associated Lie algebra has some properties.
Although in general answering this question seems to be a quite difficult task, there have been obtained a number of interesting results regarding the question among which we mention, for example, the following ones. In \cite{jen1}, with a suitable definition of ``commutator ideals", many of the properties of commutator subgroups had analogues in the theory of associative algebras. In particular,  Jennings~\cite{jen1} was interested in extending the notions of ``nilpotent group" and ``solvable group" to rings, and proved that: (1)  if $\mA$ is an associative algebra, not of characteristic $2$ (the latter condition is shown to be indispensable), then $\mA$ is solvable if its associated Lie algebra is;  (2) if $\mA$ is an associative algebra whose associated Lie algebra is nilpotent, then
the ideal~$\mA\circ \mA$  of~$\mA$ generated by the set~$\{ab-ba\mid \aa,\bb\in\mA\}$  is nilpotent. Sharma and Srivastava~\cite{lie-solvable} proved that if $\mA$ is an associative algebra over a field $\mathbb F$ whose associated Lie algebra is
solvable, and if the characteristic of $\mathbb F$ is neither~$2$ nor~$3$, then~$\mA\circ \mA$ is nil. Riley~\cite{Riley} proved that for an associative algebra~$\mA$ over a field of characteristic $p >0$,  the ideal~$\mA\circ \mA$ is nil of bounded index if the associated Lie algebra of $\mA$ is either nilpotent or solvable with $p> 2$.

Novikov algebras, bicommutative algebras, and assosymmetric algebras are important varieties of Lie-admissible algebras. 
Unfortunately, 
there are no new interesting complex finite-dimensional simple algebras in varieties of Novikov, bicommutative and assosymmetric algebras \cite{Kl57,dzhuma,zelm}.
Hence, the study of nilpotent and nearby nilpotent algebras attracts very intensive attention.
So, their algebraic and geometric classifications of nilpotent algebras from these varieties are given in dimension 4 \cite{bi4, ass4, nov4}. Burde and Graaf~\cite{Burde} classified the complex $4$-dimensional Novikov algebras that have a nilpotent associated Lie algebra.  
Pokrass and Rodabaugh proved that each solvable assosymmetric ring of characteristic different from $2$ and $3$  is nilpotent \cite{pr77}.
Filippov proved that a Novikov nil algebra is nilpotent \cite{fil}.
Shestakov and  Zhang studied solvability and nilpotency of Novikov algebras \cite{shest}.
Dzhumadildaev and Tulenbaev obtained a version of the Engel theorem for Novikov algebras in \cite{dzhuma06}.
Burde, Dekimpe, and Vercammen showed that if a nilpotent Lie algebra admits a bicommutative structure, then it admits a complete bicommutative structure, i.e., the right multiplication for the bicommutative structure is always nilpotent \cite{bd09}.  Very recently,  Tulenbaev,  Umirbaev, and Zhelyabin~\cite{TUZ} have studied the Lie solvability of Novikov algebras and showed that over a field
of characteristic $\neq 2$, a Novikov algebra is Lie solvable if and only if its commutator ideal is right nilpotent.   The current paper is a continuation of the investigation of  Lie-admissible algebras such as Novikov algebras, bicommutative algebras, and assosymmetric algebras in terms of their associated Lie algebras. In particular, we characterize these algebras of finite class and the Lie nilpotency of these algebras.

 The article is organized as follows: In Section \ref{sec-central-ideal}, we study central chains of ideals for Novikov and bicommutative algebras and obtain that that if $\mathcal{A}$ is either a Novikov algebra or a bicommutative algebra that is of finite class, then the commutator ideal $\mathcal{A}\circ \mathcal{A}$ is nilpotent (Theorem~\ref{th-pro}). In Section~\ref{sec-Lie nilpotent-nov}, based on Theorem~\ref{th-pro}, we show that a Novikov algebra or bicommutative
algebra $\mathcal{A}$ is Lie nilpotent if and only if $\mathcal{A}$ is of finite class (Theorem~\ref{nil-finclass}). We describes the products of the commutator ideals of Novikov algebras and bicommutative algebras that does not hold for associative algebras in general (Theorem~\ref{prod-com-id}). In Section~\ref{sec-4},
we generalizes some properties of associative algebras for assosymmetric algebras. In particular, we prove that if $\mathcal{A}$ be an assosymmetric algebra of finite class, then $\mathcal{A}\circ \mathcal{A}$ is nilpotent of nilpotent index less or equal to the class of $\mathcal{A}$ (Theorem~\ref{thm45}), and show that~$Id(\mathcal{A}_{[i]}) Id(\mathcal{A}_{[j]})\subseteq Id(\mathcal{A}_{[i+j-1]})$ if~$i$ or $j$ is odd
for every assosymmetric algebra $\mathcal{A}$ (Theorem~\ref{cp-ass}).

\section{Central chains of ideals for Novikov and bicommutative algebras }\label{sec-central-ideal}
The aim of this section is to show that some properties of lower central chain of associative algebras also hold for Novikov algebras and bicommutative algebras. These properties will be very useful for the study of Lie nilpotent Novikov algebras and Lie nilpotent bicommutative algebras in the next section.  We begin with some basic facts on Lie-admissible algebras.

Let $\mA$ be an arbitrary Lie-admissible algebra over a given field $F$. We define
$$[\aa,\bb]=\aa\bb-\bb\aa$$ for all $a$ and $b\in\mA$.
For all subspaces $A$, $B$, $C$ of~$\mA$, we define
$$[A,B]=\Span\{[\aa,\bb] \mid \aa\in A, \bb\in B\},   \   \ AB=\Span\{\aa\bb\mid \aa\in A, \bb\in B\}$$
and
$$(A,B,C)=\Span\{(\aa,\bb,\cc) \mid \aa\in A, \bb\in B, \cc\in C\},$$
where the associator~$(a,b,c)$ means~$(ab)c-a(bc)$.
We call a space~$V\subseteq \mA$ a \emph{Lie ideal} of~$\mA$  if we have~$[V,\mA]\subseteq V$.
Finally, for all subspaces  $A$ and $B$ of~$\mA$, we define
$$A\circ B=\Id{[A,B]},$$
that is, the ideal of~$\mA$ generated by~$[A,B]$. Following the idea of Jennings \cite{jen1}, we call~$A\circ B$ the  \emph{commutator ideal} of~$A$ and~$B$.  We clearly have~$A\circ B=B\circ A$.

Equipped with the notion of commutator ideals, we are now able to recall the notion of central chains of ideals of a Lie-admissible algebra~$\mA$.

Let
\begin{equation}\label{def-cent}
\mA=A_1\supseteq A_2 \supseteq \cdots \supseteq A_\mm
\supseteq A_{\mm+1}=(0)
\end{equation}
be a chain of ideals of~$\mA$. Such a chain is called a \emph{central chain of ideals} if we have
\begin{equation}\label{cent-cond}
\mA\circ A_\ii\subseteq A_{\ii+1} \ \ \   (\ii=1,2\wdots m) .
\end{equation}
We shall soon see that Novikov algebras,  bicommutative algebras and assosymmetric algebras which possess central chains  of ideals have special properties; we investigate some of them by considering a particular central chain:
\begin{defi}\label{defi-fc}
For every Lie-admissible algebra~$\mA$ we form a series of ideals
\begin{equation}\label{def-hi}
H_1:=\mA, \  H_{\ii+1}:=H_\ii\circ \mA \mbox{ for } \ii\geq 1.
\end{equation}
We say that~$\mA$ is of \emph{finite class} if~$H_n=(0)$ for some positive integer~$n$. For the minimal integer~$n$ such that~$H_n=(0)$, we call~$n-1$ the \emph{class} of~$\mA$, and call
\begin{equation}\label{lower-cent}
\mA=H_1\supseteq H_2\supseteq \cdots \supseteq H_{n-1}\supseteq H_n=(0)
\end{equation}
the \emph{lower central chain} of~$\mA$.  To avoid too many repetitions, we shall fix the notation of~$H_\ii$ for all~$\ii\geq 1$.
\end{defi}

With the notations of~\eqref{cent-cond} and~\eqref{lower-cent}, it is straightforward to show that~$H_i\subseteq A_i$ by induction on~$i$.

The following lemma provides us with a description of commutator ideals of Novikov algebras or bicommutative algebras.

\begin{lem}\label{com-id}
Let~$\mA$ be either a Novikov algebra or a bicommutative algebra, and let~$A$ and $B$ be two Lie ideals of~$\mA$. Then we have~$A\circ B=[A,B] +\mA[A,B]$.
\end{lem}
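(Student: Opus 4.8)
The plan is to show the two inclusions $A\circ B\supseteq [A,B]+\mA[A,B]$ and $A\circ B\subseteq [A,B]+\mA[A,B]$ separately; the first is essentially trivial, so the content is in the second. Since $A\circ B=\Id{[A,B]}$ is by definition an ideal containing $[A,B]$, it automatically contains $\mA[A,B]$, giving the inclusion ``$\supseteq$''. For ``$\subseteq$'', set $M:=[A,B]+\mA[A,B]$. It suffices to prove that $M$ is a two-sided ideal of $\mA$, because then $M$ is an ideal containing $[A,B]$, hence contains the smallest such ideal $A\circ B$. Thus the whole problem reduces to checking $\mA M\subseteq M$ and $M\mA\subseteq M$.

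The key step is therefore to verify the ideal condition using the Novikov identities~\eqref{rightcom}, \eqref{leftsym} (resp.\ the bicommutative identities~\eqref{leftcom}, \eqref{rightcom}) together with the hypothesis that $A$ and $B$ are \emph{Lie} ideals. I would first deal with left multiplication: for $x\in\mA$ and $[\aa,\bb]\in[A,B]$, I need $x[\aa,\bb]\in M$, which is immediate since $x[\aa,\bb]\in\mA[A,B]\subseteq M$; and for $x(y[\aa,\bb])$ with $y\in\mA$ I would use left symmetry (Novikov) or left commutativity (bicommutative) to rewrite $x(y[\aa,\bb])$ modulo $\mA[A,B]$ — in the bicommutative case $x(y z)=y(xz)$ directly keeps it in $\mA[A,B]$, while in the Novikov case $x(yz)=(xy)z-(yx)z+y(xz)$ via left symmetry, and $(xy)z,(yx)z\in \mA[A,B]$ and $y(xz)$ is handled by the same kind of rewriting. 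For right multiplication, $([\aa,\bb])x$ and $(y[\aa,\bb])x$ must be pushed back into $M$: here I would expand $[\aa,\bb]x=(\aa\bb)x-(\bb\aa)x$ and use right commutativity $(uv)w=(uw)v$ to turn these into $(\aa x)\bb-(\bb x)\aa$-type expressions, then rewrite $(\aa x)\bb$ as $\aa(x\bb)$ plus an associator, and crucially invoke that $A$ is a Lie ideal so that $[\aa,x]\in A$ and similarly for $B$, allowing the resulting commutators to be absorbed into $[A,B]$ or $\mA[A,B]$; the associator terms should collapse via left symmetry. The case $(y[\aa,\bb])x$ reduces to the previous ones using right commutativity to move $x$ past $[\aa,\bb]$ first.

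The main obstacle I anticipate is the bookkeeping in the right-ideal verification: unlike left multiplication, pushing a factor to the right of a commutator genuinely requires both the structure identities and the Lie-ideal hypothesis simultaneously, and one has to be careful that every commutator produced still has \emph{one} argument in $A$ and the other in $B$ (so it lands in $[A,B]$, not merely in some larger commutator space). I would organize this by proving an auxiliary identity expressing $[\aa,\bb]x$ modulo $\mA[A,B]$ as an element of $[A,B]$, handling the Novikov and bicommutative cases in parallel where the identities permit and splitting into two short sub-arguments where they diverge. Once the right-ideal property is established, $M\mA\subseteq M$ for the full $M=[A,B]+\mA[A,B]$ follows by combining the computation for $[A,B]\mA$ with an application of right commutativity to $(\mA[A,B])\mA$, and the proof concludes.
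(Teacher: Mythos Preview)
Your overall strategy is the same as the paper's: set $M=[A,B]+\mA[A,B]$ and show that $M$ is a two-sided ideal. The gap is in your treatment of left multiplication on $\mA[A,B]$. In the bicommutative case you claim that $x(yz)=y(xz)$ ``directly keeps it in $\mA[A,B]$'', but with $z=[\aa,\bb]$ this only yields $x(y[\aa,\bb])=y(x[\aa,\bb])$, which is still an element of $\mA\cdot(\mA[A,B])$, not of $\mA[A,B]$; left commutativity permutes the two outer left factors but does not reduce the nesting depth. In the Novikov case your identity $x(y[\aa,\bb])=(xy)[\aa,\bb]-(yx)[\aa,\bb]+y(x[\aa,\bb])$ is correct, but saying that ``$y(x[\aa,\bb])$ is handled by the same kind of rewriting'' is circular: applying the same identity to $y(x[\aa,\bb])$ just returns you to $x(y[\aa,\bb])$ modulo $\mA[A,B]$.

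What the paper does---and what your plan is missing---is to reverse the order and treat right multiplication $[\aa,\bb]x$ \emph{first}, not via associators but via the Jacobi identity: since $[[\aa,\bb],x]=[[\aa,x],\bb]+[\aa,[\bb,x]]$ and $A,B$ are Lie ideals, one gets
\[
[\aa,\bb]x=x[\aa,\bb]+[[\aa,x],\bb]+[\aa,[\bb,x]]\in \mA[A,B]+[A,B]
\]
immediately. This is both cleaner than your associator route (where ``the associator terms should collapse via left symmetry'' is not justified) and, crucially, it is then fed back into the left-multiplication step: one writes $y(x[\aa,\bb])=y([\aa,\bb]x)-y[[\aa,x],\bb]-y[\aa,[\bb,x]]$, so only the term $y([\aa,\bb]x)$ still needs work. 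In the bicommutative case left commutativity gives $y([\aa,\bb]x)=[\aa,\bb](yx)$, which lies in $M$ by the step just established; in the Novikov case left symmetry together with right commutativity and the already-proved inclusion $[A,B]\mA\subseteq M$ finish it. Your decision to attack left multiplication before right multiplication is precisely what makes the argument stall.
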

\begin{proof} Clearly, it suffices to show that~$[A,B] +\mA[A,B]$ is an ideal of~$\mA$. In either case, $\mA$ is a Lie-admissible algebra. So we have
$$[[\xx,\yy],\zz]+[[\yy,\zz],\xx]+[[\zz,\xx],\yy]=0$$  for all~$\xx,\yy,\zz\in\mA$.	 In particular, for all~$\aa\in A$,  $\bb\in B$ and~$\xx\in \mA$, we have
$$[\aa,\bb]\xx-\xx[\aa,\bb]
=[[\aa,\bb],\xx]=[[\aa,\xx],\bb]+[\aa,[\bb,\xx]].$$
Since~$[\aa,\xx]\in A$ and~$[\bb,\xx]\in B$, we obtain  that
$$[\aa,\bb]\xx=\xx[\aa,\bb]+[[\aa,\xx],\bb]+[\aa,[\bb,\xx]]\in [A,B] +\mA[A,B].$$
For all~$\yy\in \mA$, by the right commutativity, we obtain that
$$(\xx[\aa,\bb])\yy=(\xx\yy)[\aa,\bb]\in [A,B] +\mA[A,B].$$
Moreover, since~$\mA$ is Lie-admissible algebra, we have
$$\yy(\xx[\aa,\bb])
=\yy(-[[\aa,\xx],\bb]-[\aa,[\bb,\xx]]+[\aa,\bb]\xx)
=-\yy[[\aa,\xx],\bb]-\yy[\aa,[\bb,\xx]]+\yy([\aa,\bb]\xx).$$

If~$\mA$ is a Novikov algebra, then by left symmetry and by the above reasoning, we deduce
\begin{align*}
\yy(\xx[\aa,\bb])
=&-\yy[[\aa,\xx],\bb]-\yy[\aa,[\bb,\xx]]+\yy([\aa,\bb]\xx)&\\
=&-\yy[[\aa,\xx],\bb]-\yy[\aa,[\bb,\xx]]
+(\yy[\aa,\bb])\xx+[\aa,\bb](\yy\xx)-([\aa,\bb]\yy)\xx&\\
=&-\yy[[\aa,\xx],\bb]-\yy[\aa,[\bb,\xx]]
+(\yy\xx)[\aa,\bb]+[\aa,\bb](\yy\xx)-([\aa,\bb]\yy)\xx&\\
\in &\, [A,B] +\mA[A,B].&
\end{align*}
 It follows that $[A,B] +\mA[A,B]$ is an ideal, and so we have~$A\circ B=[A,B] +\mA[A,B]$ if~$\mA$ is a Novikov algebra.

 If~$\mA$ is a bicommutative algebra, then by left commutativity and by the above reasoning, we have
\begin{align*}
\yy(\xx[\aa,\bb])
&=-\yy[[\aa,\xx],\bb]-\yy[\aa,[\bb,\xx]]+\yy([\aa,\bb]\xx)&\\
&=-\yy[[\aa,\xx],\bb]-\yy[\aa,[\bb,\xx]]+[\aa,\bb](\yy\xx)&\\
&\in \, [A,B] +\mA[A,B].&
\end{align*}
It follows that $[A,B] +\mA[A,B]$ is an ideal, and so we have~$A\circ B=[A,B] +\mA[A,B]$ if~$\mA$ is a bicommutative algebra, thus finishing the proof.
\end{proof}

It is well known (see, e.g., \cite[Theorem 2.1]{jen1}) that  an associative algebra is of finite class if and only if it has a central chain of ideals,  the proof of which is quite easy and holds for every Lie-admissible algebra.  In light of this note, we shall focus on the study of properties of the lower central chain of Novikov algebras and bicommutative algebras that are of finite class.

For associative algebras that are of finite class, Jennings \cite{jen1} showed nice connections between the central  chains~\eqref{def-cent} and~\eqref{lower-cent}. In particular, in \cite[Theorems 3.3 and 3.4]{jen1} Jennings proved that $H_p A_q\subseteq A_{\pp+\qq-1}$ and $H_p\circ A_q\subseteq A_{\pp+\qq}$ for all integers~$\pp,\qq\geq1$, and the proof was based essentially on the associativity. In the following theorem (Theorem~\ref{th-pro}), we provide an analogue of this result for  Novikov algebras and bicommutative algebras with new techniques.

\begin{lem}\label{circ-pro} Let~$\mA$ be either a Novikov algebra or a bicommutative algebra, and let~$\mA=A_1\supseteq A_2 \supseteq \cdots \supseteq A_\mm
\supseteq \cdots $ be a chain of ideals of~$\mA$  satisfying~$\mA \circ A_\ii\subseteq A_{\ii+1}$ for all~$\ii\geq 1$.
Define a series of ideals inductively by the rule
$H_1=\mA$, $H_{\ii+1}=\mA \circ H_\ii \mbox{ for } \ii\geq 1$.
Then  we have~$H_\pp\circ A_q\subseteq A_{\pp+\qq}$. In particular, we have~$H_\pp\circ H_\qq\subseteq H_{\pp+\qq}$.
\end{lem}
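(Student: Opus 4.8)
The plan is to prove, by induction on~$p$, the slightly sharper statement that $[H_p,A_q]\subseteq A_{p+q}$ for every $q\ge 1$. Since every $H_i$ and every $A_i$ is an ideal of~$\mA$, hence a Lie ideal, Lemma~\ref{com-id} gives $H_p\circ A_q=[H_p,A_q]+\mA[H_p,A_q]$, so once $[H_p,A_q]\subseteq A_{p+q}$ is known the conclusion $H_p\circ A_q\subseteq A_{p+q}$ follows because $A_{p+q}$ is an ideal; and applying this with the chain taken to be $(H_i)_{i\ge 1}$ itself (which satisfies $\mA\circ H_i=H_{i+1}$) yields the final assertion $H_p\circ H_q\subseteq H_{p+q}$. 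The base case $p=1$ is exactly the hypothesis $\mA\circ A_q\subseteq A_{q+1}$. For the inductive step I would first invoke Lemma~\ref{com-id} once more to write $H_p=\mA\circ H_{p-1}=[\mA,H_{p-1}]+\mA[\mA,H_{p-1}]$, which reduces the problem to bounding $[[\mA,H_{p-1}],A_q]$ and $[\mA[\mA,H_{p-1}],A_q]$.

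The first of these is controlled using only Lie-admissibility and the induction hypothesis: for $x\in\mA$, $h\in H_{p-1}$, $c\in A_q$ the Jacobi identity of the associated Lie algebra gives $[[x,h],c]=[x,[h,c]]+[[x,c],h]$, and here $[h,c]\in[H_{p-1},A_q]\subseteq A_{p+q-1}$ forces $[x,[h,c]]\in\mA\circ A_{p+q-1}\subseteq A_{p+q}$, while $[x,c]\in A_{q+1}$ forces $[[x,c],h]\in[H_{p-1},A_{q+1}]\subseteq A_{p+q}$, both by the induction hypothesis. Hence $[[\mA,H_{p-1}],A_q]\subseteq A_{p+q}$.

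The term $\mA[\mA,H_{p-1}]$ is where the two varieties must be treated apart, and I expect this to be the main obstacle. The key is a short identity for $x[a,h]$ obtained from the defining relations: using left commutativity~\eqref{leftcom} together with right commutativity~\eqref{rightcom} one gets $x[a,h]=[xa,h]+[a,xh]$ in the bicommutative case, whereas using left symmetry~\eqref{leftsym} together with~\eqref{rightcom} one gets $x[a,h]=[xa,h]+[a,xh]-[a,h]\,x$ in the Novikov case. In the bicommutative case this says $\mA[\mA,H_{p-1}]\subseteq[\mA,H_{p-1}]$, so $H_p=[\mA,H_{p-1}]$ and nothing new is needed. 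In the Novikov case it gives $H_p\subseteq[\mA,H_{p-1}]+[\mA,H_{p-1}]\mA$, so it remains to bound $[vx,c]$ for $v\in[\mA,H_{p-1}]$, $x\in\mA$, $c\in A_q$; rewriting $(vx)c$ and $(cv)x$ by right commutativity and $c(vx)$ by left symmetry brings $[vx,c]$ to a combination of $[v,c]\,x$ — which lies in $A_{p+q}\mA\subseteq A_{p+q}$ by the previous paragraph — and $[cx,v]$ with $cx\in A_q$, which again lies in $[[\mA,H_{p-1}],A_q]\subseteq A_{p+q}$. This closes the induction.

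Besides the case split, the subtle point to keep in mind throughout is that the hypotheses give no control over products $A_iA_j$ of members of the chain, so the computation must be organised so that every product that appears is either absorbed into an ideal $A_{p+q}$ (using $A_{p+q}\mA=\mA A_{p+q}\subseteq A_{p+q}$) or rewritten, via \eqref{leftcom}/\eqref{rightcom}/\eqref{leftsym}, as a Lie commutator $[\,\cdot\,,\,\cdot\,]$ to which the Jacobi-plus-induction estimate of the second paragraph applies (or as a one-sided multiple of such a commutator). Verifying the two displayed identities for $x[a,h]$ and the rewriting of $[vx,c]$ is routine but must be done carefully to keep the indices correct.
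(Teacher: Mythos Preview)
Your argument is correct and follows the same overall strategy as the paper's proof: induction on~$p$, reduction via Lemma~\ref{com-id} to the estimate $[H_p,A_q]\subseteq A_{p+q}$, the Jacobi identity for the ``pure commutator'' piece $[\mA,H_{p-1}]$, and an explicit manipulation using right commutativity together with \eqref{leftcom} or \eqref{leftsym} for the remaining piece. The organisation of that last step is where you differ. The paper leaves $h=y[h_{p-1},x]=yh'$ as is and computes $[yh',a]$ directly, arriving (after a longer chain of rewritings) at $[h',a]y+y[h',a]-[h',ya]$ in the Novikov case and $[ya,h']+y[h',a]$ in the bicommutative case. You instead first establish the auxiliary identity $x[a,h]=[xa,h]+[a,xh]$ (bicommutative) or $x[a,h]=[xa,h]+[a,xh]-[a,h]x$ (Novikov), which rewrites $\mA[\mA,H_{p-1}]$ itself: in the bicommutative case this collapses Case~2 entirely (so $H_p=[\mA,H_{p-1}]$, a structural fact the paper only obtains later in Theorem~\ref{prod-com-id}), while in the Novikov case it reduces the remaining computation to the short identity $[vx,c]=2[v,c]x+[cx,v]$. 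Both routes use the same ingredients; yours packages them more economically, at the cost of an extra preliminary identity.
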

\begin{proof}   Obviously, for all~$\pp,\qq\geq1$, all~$H_\pp$ and~$A_\qq$ are Lie ideals of~$\mA$.
We use induction on~$\pp$ to prove the claim. For~$\pp=1$,  the claim immediately follows from the definitions of~$H_p$ and~$A_q$.  Assume that~$\pp>1$.   By Lemma~\ref{com-id}, we have
$$H_\pp \circ A_\qq=[H_\pp,A_\qq] +\mA[H_\pp,A_\qq]$$
and
$$H_p=[H_{p-1}, \mA] + \mA [H_{p-1}, \mA].$$
Since~$A_{\pp+\qq}$ is an ideal of~$\mA$, it suffices to show~$[\hh, \aa]\in A_{\pp+\qq}$  for all~$\hh\in H_\pp$, $\aa\in A_\qq$.
We need to consider the following cases.
Case 1: If~$\hh=[\hh_{\pp-1},\xx]$ for some elements~$\hh_{\pp-1}\in H_{\pp-1}$ and~$\xx\in \mA$, then by induction hypothesis, we have
\begin{multline*}
[\hh,\aa]
= [[\hh_{\pp-1},\xx],\aa]
= [[\hh_{\pp-1},\aa],\xx]+ [\hh_{\pp-1},[\xx,\aa]] \\
\in  [A_{\pp+\qq-1}, \mA]+ [H_{\pp-1}, A_{\qq+1}]
\subseteq  A_{\pp+\qq-1}\circ \mA+ H_{\pp-1}\circ A_{\qq+1}
\subseteq  A_{\pp+\qq}.
\end{multline*}
Case 2: If~$\hh=\yy[\hh_{\pp-1},\xx]$ for some elements~$\hh_{\pp-1}\in H_{\pp-1}$ and~$\xx,\yy\in \mA$, then we denote~$[\hh_{\pp-1},\xx]$ by~$\hh'$. Thus we have $\hh'\in H_\pp$ and~$\hh=\yy\hh'$.   By Case 1, for all~$\aa\in A_\qq$, we have~$[\hh',\aa]\in A_{\pp+\qq}$. In particular, we have
$[\hh',\aa]\yy,\ \yy[\hh',\aa],\ [\hh',\yy\aa]\in A_{\pp+\qq}$.

If~$\mA$ is a Novikov algebra, then by induction hypothesis, we obtain
\begin{align*}
[\hh,\aa]
=&(\yy\hh')\aa-\aa(\yy\hh')&\\
=&(\yy\aa)\hh'-(\aa\yy)\hh'
-\yy(\aa\hh')+(\yy\aa)\hh'&\\
=&(\yy\aa)\hh'-\hh'(\aa\yy)+[\hh',\aa\yy]
-\yy(\hh'\aa)+\yy[\hh',\aa]
+\hh'(\yy\aa)-[\hh',\yy\aa]&\\
=&(\yy\aa)\hh'-\hh'(\aa\yy)
-\yy(\hh'\aa)+(\hh'\yy)\aa+\yy(\hh'\aa)
-(\yy\hh')\aa
+[\hh',\aa\yy]+\yy[\hh',\aa]-[\hh',\yy\aa]&\\
=&-\hh'(\aa\yy)+(\hh'\yy)\aa
+[\hh',\aa\yy]+\yy[\hh',\aa]-[\hh',\yy\aa]\  (\mbox{by right commutativity})&\\
=&-\hh'(\aa\yy)+(\hh'\aa)\yy
+[\hh',\aa\yy]+\yy[\hh',\aa]-[\hh',\yy\aa]&\\
=&-\hh'(\aa\yy)+(\aa\hh')\yy+[\hh',\aa]\yy
+[\hh',\aa\yy]+\yy[\hh',\aa]-[\hh',\yy\aa]&\\
=&-\hh'(\aa\yy)+(\aa\yy)\hh'
+[\hh',\aa]\yy+[\hh',\aa\yy]+\yy[\hh',\aa]
-[\hh',\yy\aa]&\\
=& [\hh',\aa]\yy+\yy[\hh',\aa] -[\hh',\yy\aa]\in A_{\pp+\qq}.&
\end{align*}
Therefore, we have~$H_\pp\circ A_\qq\subseteq A_{\pp+\qq}$, thus finishing the proof for the case when~$\mA$ is a Novikov algebra.

If~$\mA$ is a bicommutative algebra, then by induction hypothesis and by the above reasoning, we obtain
\begin{align*}
[\hh,\aa]
=&(\yy\hh')\aa-\aa(\yy\hh')
=(\yy\hh')\aa-\hh'(ya)+\hh'(ya)-\aa(\yy\hh')&\\
=&(\yy\aa)\hh'-\hh'(ya)+y(\hh'a)-\yy(\aa\hh')
=[\yy\aa,\hh']+\yy [\hh',\aa]\in A_{\pp+\qq}.&
\end{align*}
Therefore, we have~$H_\pp\circ A_\qq\subseteq A_{\pp+\qq}$, thus finishing the proof.
\end{proof}

Before going further, we shall show that both Novikov algebras and bicommutative algebras are two-sided Alia~\cite{Dz09}. Note that both (left) Novikov algebras and bicommutative algebras are right commutative and Lie-admissible, the following lemma can be viewed as a corollary of~\cite[Proposition 6.1]{Dz09}.
\begin{lem}\emph{\cite[Proposition 6.1]{Dz09}}\label{lem-id}
Let~$\mA$ be a right commutative Lie-admissible algebra.  Then~$\mA$ is  a two-sided Alia algebra, more precisely, we have~$[\xx,\yy]\zz+[\yy,\zz]\xx+[\zz,\xx]\yy=0$ and~$\xx[\yy,\zz]+\yy[\zz,\xx]+\zz[\xx,\yy]=0$ for all~$\xx,\yy,\zz\in \mA$.
In particular, Novikov algebras and bicommutative algebras are  two-sided Alia.
\end{lem}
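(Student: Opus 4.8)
The plan is to prove the two identities in turn: the first, $[x,y]z+[y,z]x+[z,x]y=0$, will follow purely from right commutativity, and the second, $x[y,z]+y[z,x]+z[x,y]=0$, will then drop out formally by combining the first identity with the Jacobi identity of the associated Lie algebra. After that, the final sentence of the lemma is immediate.

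First I would treat the left-hand identity. Expanding the three commutators and reorganizing the six resulting products into three pairs, one has
\[
[x,y]z+[y,z]x+[z,x]y=\big((xy)z-(xz)y\big)+\big((yz)x-(yx)z\big)+\big((zx)y-(zy)x\big),
\]
which is just a rearrangement of terms. Each of the three summands on the right vanishes by right commutativity, i.e.\ by the identity $(ab)c=(ac)b$. Hence the left-hand identity holds. Note that this step uses right commutativity only; no other hypothesis (in particular no restriction on the characteristic of the base field) is needed here.

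Next I would derive the right-hand identity. Since~$\mA$ is Lie-admissible, its associated Lie algebra satisfies the Jacobi identity $[[x,y],z]+[[y,z],x]+[[z,x],y]=0$. Expanding each double bracket via $[[a,b],c]=[a,b]c-c[a,b]$ and collecting terms yields
\[
0=\big([x,y]z+[y,z]x+[z,x]y\big)-\big(x[y,z]+y[z,x]+z[x,y]\big);
\]
the first parenthesis vanishes by the identity established in the previous step, so the second vanishes as well, which is exactly what is wanted. Finally, the ``in particular'' clause is immediate: (left) Novikov algebras and bicommutative algebras are right commutative by definition and Lie-admissible (as recalled in the introduction), hence are two-sided Alia by the two displayed computations.

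I do not expect a genuine obstacle: the only points requiring care are the sign bookkeeping in the rearrangement for the left-hand identity, and the observation that this identity is precisely what is needed to extract the right-hand identity from the Jacobi identity, so that the conclusion holds over an arbitrary field $\mathbb F$. Alternatively, one may simply quote \cite[Proposition 6.1]{Dz09}, of which this lemma is the special case for right commutative Lie-admissible algebras.
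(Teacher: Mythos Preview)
Your proof is correct and follows essentially the same approach as the paper: the first identity is obtained by expanding the commutators and pairing the six terms so that each pair cancels by right commutativity, and the second identity is then deduced by subtracting it from the first via the Jacobi identity of the associated Lie algebra. The only cosmetic difference is the order in which the terms are grouped after expansion.
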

\begin{proof}
  By right commutativity, we have
  \begin{align*}
    [\xx,\yy]\zz+[\yy,\zz]\xx+[\zz,\xx]\yy
    =&(\xx\yy)\zz-(\yy\xx)\zz
    +(\yy\zz)\xx-(\zz\yy)\xx
    +(\zz\xx)\yy-(\xx\zz)\yy&\\
     =&(\xx\yy)\zz-(\xx\zz)\yy
     +(\yy\zz)\xx-(\yy\xx)\zz
    +(\zz\xx)\yy -(\zz\yy)\xx=0.&
  \end{align*}
 Moreover, since~$\mA$ is Lie-admissible, we have
 $$[\xx,\yy]\zz-\zz[\xx,\yy]+[\yy,\zz]\xx-\xx[\yy,\zz]
 +[\zz,\xx]\yy-\yy[\zz,\xx]=[[\xx,\yy],\zz]+[[\yy,\zz],\xx]+[[\zz,\xx],\yy]=0.$$
 Therefore, we obtain that
 $$\xx[\yy,\zz]+\yy[\zz,\xx]+\zz[\xx,\yy]=[\xx,\yy]\zz+[\yy,\zz]\xx
 +[\zz,\xx]\yy=0,$$
 thus finishing the proof.
\end{proof}

Recall that for an arbitrary subspace~$V$ of a Lie-admissible algebra $\mA$, we define~$V^1=V$ and~$V^{\nn}=\sum_{1\leq\ii\leq\nn-1}V^{\ii} V^{\nn-\ii}$ for all $\nn\geq2$. Then~$V$ is called \emph{nilpotent} if~$V^{\nn}=(0)$ for some positive integer~$\nn$.

We are now in position to provide the main result of this subsection, which shows that if~$\mA$ is a Novikov algebra (or bicommutative algebra) that is of finite class, then the commutator ideal~$\mA\circ \mA$ is nilpotent.

\begin{thm}\label{th-pro}
Retain the hypotheses and notations as Lemma~\ref{circ-pro}, we have~$H_\pp H_\qq\subseteq H_{\pp+\qq-1}$ for all positive integers~$\pp$ and~$\qq$. Moreover, if~$\mA$ is of finite class, then $\mA\circ \mA$ is nilpotent of nilpotent index less or equal to the class of~$\mA$.
\end{thm}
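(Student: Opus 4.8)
The plan is to prove $H_\pp H_\qq \subseteq H_{\pp+\qq-1}$ by induction on $\pp+\qq$, reducing the containment to generators via Lemma~\ref{com-id} and pushing elements around with the right-commutativity identity~\eqref{rightcom} together with the two-sided Alia identities of Lemma~\ref{lem-id}. For the base case $\pp=\qq=1$ there is nothing to prove since $H_1 H_1 = \mA\mA \subseteq \mA = H_1$; and whenever $\pp=1$ or $\qq=1$, say $\qq=1$, we need $H_\pp \mA \subseteq H_\pp$, which holds because $H_\pp$ is an ideal of $\mA$. So assume $\pp,\qq\geq 2$. By Lemma~\ref{com-id}, $H_\qq = [H_{\qq-1},\mA] + \mA[H_{\qq-1},\mA]$, and similarly for $H_\pp$; since $H_{\pp+\qq-1}$ is an ideal, it suffices to handle products $h\cdot g$ where $h$ is a generator of $H_\pp$ (of the form $[h_{\pp-1},x]$ or $y[h_{\pp-1},x]$) and $g$ is a generator of $H_\qq$ (of the form $[g_{\qq-1},z]$ or $w[g_{\qq-1},z]$).

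The key reductions I would carry out are the following. First, for products of the shape $[h_{\pp-1},x]\cdot g$ with $g\in H_\qq$: using $[h_{\pp-1},x] = h_{\pp-1}x - xh_{\pp-1}$ and then moving $g$ across via right commutativity $(uv)g = (ug)v$ when $g$ sits on the right after an outer multiplication, plus using $H_{\pp-1}\circ A \subseteq$ (appropriate term) from Lemma~\ref{circ-pro}. The crucial structural point is that $[h_{\pp-1}, x]g$ can be rewritten, using the Alia identity $[u,v]g + [v,g]u + [g,u]v = 0$ from Lemma~\ref{lem-id}, as $[h_{\pp-1},x]g = -[x,g]h_{\pp-1} - [g,h_{\pp-1}]x$; now $[x,g]\in \mA\circ H_\qq = H_{\qq+1}$ and $[g,h_{\pp-1}] \in H_{\pp-1}\circ H_\qq \subseteq H_{\pp+\qq-1}$ (by Lemma~\ref{circ-pro}), so each term lies in $H_{\qq+1}H_{\pp-1} + H_{\pp+\qq-1}\cdot\mA$; by the induction hypothesis $H_{\qq+1}H_{\pp-1}\subseteq H_{\pp+\qq-1}$ (the index sum $(\qq+1)+(\pp-1) = \pp+\qq$ is strictly smaller than $\pp+\qq+$ something, so we must be careful to induct on a quantity that genuinely decreases — I would induct on $\pp$ with $\qq$ arbitrary, reducing the first index). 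Then the second Alia identity $u[v,g] + v[g,u] + g[u,v] = 0$ handles the case $h = y[h_{\pp-1},x]$: writing $h g = y(h' g) = $ (outer $y$ times something already controlled) plus correction terms, where $h' = [h_{\pp-1},x]\in H_\pp$, one splits off $y\cdot(h'g)$ with $h'g \in H_{\pp+\qq-1}$ already shown, and $y\cdot H_{\pp+\qq-1}\subseteq H_{\pp+\qq-1}$ since the latter is an ideal; the remaining terms are commutators that land in lower-index products handled by induction. The Novikov and bicommutative cases will require slightly different manipulations exactly as in the proofs of Lemma~\ref{com-id} and Lemma~\ref{circ-pro} (left symmetry versus left commutativity), so I would split into two bullet cases there too.

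For the second assertion, suppose $\mA$ is of finite class $c$, so $H_{c+1} = (0)$ and $H_c \neq (0)$. By Lemma~\ref{com-id}, $\mA\circ\mA = H_2 = [\mA,\mA] + \mA[\mA,\mA]$. I claim $(\mA\circ\mA)^k = (H_2)^k \subseteq H_{k+1}$ for all $k\geq 1$, proved by induction on $k$: the case $k=1$ is immediate, and $(H_2)^{k+1} = \sum_{i=1}^{k}(H_2)^i (H_2)^{k+1-i}$, where each summand $(H_2)^i(H_2)^{k+1-i} \subseteq H_{i+1}H_{k+2-i} \subseteq H_{(i+1)+(k+2-i)-1} = H_{k+2}$ by the first part of the theorem and the induction hypothesis. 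Hence $(\mA\circ\mA)^{c} \subseteq H_{c+1} = (0)$, so $\mA\circ\mA$ is nilpotent of nilpotency index at most $c$, the class of $\mA$.

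The main obstacle I anticipate is bookkeeping in the first part: one must choose the induction variable so that every invocation of "the induction hypothesis" and of Lemma~\ref{circ-pro} refers to a genuinely smaller instance, and simultaneously keep the generator-by-generator case analysis (two forms of $h$, and — since $H_\qq$ also has two generator types, though one may be able to absorb the $w[g_{\qq-1},z]$ case into ideal-closure — possibly two forms of $g$) from exploding. Inducting on $\pp$ alone, with $\qq$ universally quantified, and always reducing $h$ to the $H_{\pp-1}$ level, should make the dependency structure clean; the identities actually needed are right commutativity, left symmetry (Novikov) or left commutativity (bicommutative), and the two Alia identities — nothing beyond what the earlier lemmas supply. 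The algebra in the Novikov subcase will be the longest single computation, mirroring the eight-line display in Lemma~\ref{circ-pro}, but it is routine once the reductions above are set up.
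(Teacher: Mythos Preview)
Your core idea matches the paper's: both rewrite $[h_{p-1},x]\,g$ via the Alia identity of Lemma~\ref{lem-id} to land in $H_{q+1}H_{p-1}+H_{p+q-1}\mA$, and your argument for $(H_2)^m\subseteq H_{m+1}$ is verbatim the paper's.

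The gap is in the induction for the first claim. Inducting on $p$ alone does not close: after the Alia step you face $H_{q+1}H_{p-1}$, whose \emph{first} index is $q+1\geq p$, not something below $p$; you flag this worry but do not resolve it. The paper's fix is a single observation you are missing: by Lemma~\ref{circ-pro}, $[H_p,H_q]\subseteq H_{p+q}$, so $h_ph_q$ and $h_qh_p$ agree modulo $H_{p+q-1}$. This swap lets one induct on $\min\{p,q\}$ (taking $p\leq q$ without loss), so that $H_{q+1}H_{p-1}$ genuinely has smaller minimum index $p-1$. The same swap also dispatches the generator $h_p=y[h_{p-1},x]$ cleanly and \emph{uniformly} across both varieties: by right commutativity $(y[h_{p-1},x])g=(yg)[h_{p-1},x]$ with $yg\in H_q$, then swap factors to reduce to the case $[h_{p-1},x]\cdot h_q'$ already handled. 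There is no need for the second Alia identity, no ``$y(h'g)$ plus correction terms'', and no Novikov/bicommutative case split here---the paper's proof of this theorem is one argument for both.
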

\begin{proof}
We first use induction on~$\min\{\pp,\qq\}$ to prove~$H_\pp H_\qq\subseteq H_{\pp+\qq-1}$. If~$\min\{\pp,\qq\}=1$, then it is clear because~$H_\pp$ and~$H_\qq$ are ideals of~$\mA$.
 Assume that~$\pp,\qq\geq 2$. By Lemma~\ref{circ-pro}, for all $\hh_\pp\in H_\pp$ and $\hh_\qq\in H_\qq$, we have
$$\hh_\pp\hh_\qq-\hh_\qq\hh_\pp=[\hh_\pp, \hh_\qq]
\subseteq H_\pp\circ H_\qq\subseteq H_{\pp+\qq}\subseteq H_{\pp+\qq-1}.$$	In other words, $\hh_\pp\hh_\qq$ lies in~$H_{\pp+\qq-1}$ if and only if so does~$\hh_\qq\hh_\pp$. Without loss of generality, we may assume~$\pp\leq \qq$.
If~$\hh_\pp=\xx_2[\hh_{\pp-1},\xx_1]$ for some~$\xx_1,\xx_2\in\mA$ and~$\hh_{\pp-1}\in H_{\pp-1}$, then we have
$$\hh_\pp\hh_\qq
=(\xx_2[\hh_{\pp-1},\xx_1])\hh_\qq
=(\xx_2\hh_\qq)[\hh_{\pp-1},\xx_1]
=\hh_\qq'[\hh_{\pp-1},\xx_1]$$
for~$\hh_\qq'=\xx_2\hh_\qq\in H_\qq$. So~$\hh_\pp\hh_\qq$ lies in~$H_{\pp+\qq-1}$ if and only if so does~$\hh_\qq'[\hh_{\pp-1},\xx_1]$. By the above reasoning, it suffices to show~$[\hh_{\pp-1},\xx_1]\hh_\qq'\in H_{\pp+\qq-1}$.
So we may assume~$\hh_\pp=[\hh_{\pp-1},\xx]$ for some~$\xx\in\mA$ and~$\hh_{\pp-1}\in H_{\pp-1}$. By  Lemmas~\ref{circ-pro} and~\ref{lem-id} and  by induction hypothesis, we deduce
$$
\hh_\pp\hh_\qq=[\hh_{\pp-1},\xx]\hh_\qq
=-[\xx,\hh_\qq]\hh_{\pp-1} -[\hh_\qq,\hh_{\pp-1}]\xx
\in  H_{\qq+1} H_{\pp-1}+H_{\pp-1+\qq}\xx
\subseteq  H_{\pp+\qq-1}.
$$
For the second claim, since~$\mA\circ \mA=H_2$, we shall use induction on~$\mm$ to show~$(H_2)^m\subseteq H_{m+1}$. For~$\mm=1$, there is nothing to prove. Assume that~$\mm\geq 2$. Then by induction hypothesis, we have
\begin{align*}
   (H_2)^\mm&=\sum_{1\leq\ii\leq\mm-1}(H_2)^{\ii} (H_2)^{\mm-\ii}
\subseteq\sum_{1\leq\ii\leq\mm-1}H_{\ii+1} H_{\mm-\ii+1}&\\
&\subseteq\sum_{1\leq\ii\leq\mm-1}H_{\ii+1+\mm-\ii+1-1}
=H_{\mm+1}.&
\end{align*}
Since~$\mA$ is of finite class,  we may assume $H_{\mm+1}=(0)$ for some integer $\mm$, and so $H_2$ is nilpotent of index not greater than~$\mm$, thus finishing the proof.
\end{proof}

\section{Lie nilpotent Novikov  and bicommutative algebras}\label{sec-Lie nilpotent-nov} In this section, based on Theorem \ref{th-pro}, we show that a Novikov algebra or bicommutative algebra $\mA$ is Lie nilpotent if and only if~$\mA$ is of finite class (Theorem \ref{nil-finclass}). We also find a property of Novikov algebras or bicommutative algebras on the products of commutator ideals that does not hold in general for associative algebras (Theorem \ref{prod-com-id}).

We begin this section by recalling useful notions. Let~$\mA$ be a Lie-admissible algebra. We define
$$\mA_{[1]}=\mA \ \mbox{ and }\  \mA_{[\ii+1]}=[\mA,\mA_{[\ii]}] \text{ for all } \ii\geq 1.$$
We call~$\Id{\mA_{[\ii]}}$ the \emph{$i$th commutator ideal} of~$\mA$. And the algebra~$\mA$ is called \emph{Lie nilpotent} if~$\mA_{[\ii]}=(0)$ for some integer~$\ii$.

For every integer~$\ii\geq1$, the linear space~$\mA_{[\ii]}$ is obviously a Lie ideal of~$\mA$. Therefore, by Lemma~\ref{com-id}, we immediately obtain the construction of~$\Id{\mA_{[\ii]}}$ when~$\mA$ is a Novikov algebra or bicommutative algebra.
\begin{lem}\label{lem-mni}
Let~$\mA$ be either a Novikov algebra or a bicommutative algebra. Then we have~$\Id{\mA_{[\ii]}}=\mA_{[\ii]}+\mA\mA_{[\ii]}$ for all integer~$\ii\geq 1$.
\end{lem}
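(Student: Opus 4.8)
The plan is to verify directly that $[A,B]+\mA[A,B]$ is an ideal when $A=B=\mA_{[i]}$, but in fact Lemma~\ref{com-id} already does all the work: for every $i\geq 1$ the space $\mA_{[i]}$ is a Lie ideal of $\mA$, since by definition $\mA_{[i+1]}=[\mA,\mA_{[i]}]\subseteq\mA_{[i]}$ and the latter inclusion follows by an easy induction (for $i=1$ it is trivial, and $[\mA,\mA_{[i+1]}]=[\mA,[\mA,\mA_{[i]}]]\subseteq\mA_{[i+1]}$ once one knows $\mA_{[i+1]}\subseteq\mA_{[i]}$, which itself needs the same induction). So first I would record the standing fact that each $\mA_{[i]}$ is a Lie ideal.

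Second, I would simply apply Lemma~\ref{com-id} with $A=B=\mA_{[i]}$. That lemma says that for any two Lie ideals $A,B$ of a Novikov or bicommutative algebra $\mA$ one has $A\circ B=[A,B]+\mA[A,B]$. Taking $A=B=\mA_{[i]}$ gives $\mA_{[i]}\circ\mA_{[i]}=[\mA_{[i]},\mA_{[i]}]+\mA[\mA_{[i]},\mA_{[i]}]$, which is not quite the claimed formula. The point is that $\Id{\mA_{[i]}}$ is the ideal generated by $\mA_{[i]}$ itself, not by $[\mA_{[i]},\mA_{[i]}]$, so I instead need the analogous (and easier) statement that for a single Lie ideal $V$ of $\mA$ one has $\Id{V}=V+\mA V$. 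This is proved exactly as in Lemma~\ref{com-id}: it suffices to show $V+\mA V$ is an ideal. For $v\in V$, $x\in\mA$ we have $vx=xv+[v,x]\in V+\mA V$ since $[v,x]\in V$; then $(xv)y=(xy)v\in\mA V$ by right commutativity (valid for both Novikov and bicommutative algebras); and $y(xv)$ is handled by the same left-symmetry (Novikov) or left-commutativity (bicommutative) computation already carried out in the proof of Lemma~\ref{com-id}, with $[a,b]$ replaced throughout by an element $v\in V$. In fact one can phrase it so as to literally invoke Lemma~\ref{com-id}: note $\mA_{[i]}=[\mA,\mA_{[i-1]}]$ for $i\geq 2$, and both $\mA$ and $\mA_{[i-1]}$ are Lie ideals, so $\Id{\mA_{[i]}}=\Id{[\mA,\mA_{[i-1]}]}=\mA\circ\mA_{[i-1]}=[\mA,\mA_{[i-1]}]+\mA[\mA,\mA_{[i-1]}]=\mA_{[i]}+\mA\mA_{[i]}$; the case $i=1$ is the trivial observation $\Id{\mA}=\mA=\mA+\mA\mA$ (noting $\mA\mA\subseteq\mA$).

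I do not expect any real obstacle here: the statement is essentially a repackaging of Lemma~\ref{com-id}, and the only thing to be careful about is the indexing, namely writing $\mA_{[i]}$ as the commutator bracket $[\mA,\mA_{[i-1]}]$ so that Lemma~\ref{com-id} applies verbatim, and treating $i=1$ separately. The mild subtlety worth a sentence in the write-up is confirming that $\mA_{[i]}$ really is a Lie ideal (the induction mentioned above), since Lemma~\ref{com-id} is stated only for Lie ideals; but this is immediate from the Jacobi identity and the nesting $\mA_{[i+1]}\subseteq\mA_{[i]}$.
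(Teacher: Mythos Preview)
Your proposal is correct and, once you arrive at the final argument, takes exactly the same route as the paper: observe that each $\mA_{[i]}$ is a Lie ideal, write $\mA_{[i]}=[\mA,\mA_{[i-1]}]$ for $i\geq 2$, and apply Lemma~\ref{com-id} with $A=\mA$ and $B=\mA_{[i-1]}$ to get $\Id{\mA_{[i]}}=\mA\circ\mA_{[i-1]}=\mA_{[i]}+\mA\mA_{[i]}$, treating $i=1$ trivially. The detour through $A=B=\mA_{[i]}$ and the sketch of a direct ``$\Id{V}=V+\mA V$'' argument are unnecessary; in the actual write-up you can drop them and go straight to the application of Lemma~\ref{com-id} as the paper does.
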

Lots of interests have been attracted by the subject of commutator ideals of associative algebras.  Etingof, Kim and Ma~\cite{universal-Lie nilpotent} studied the quotient of a free algebra by its $i$-th commutator ideal, and studied the products of such commutator ideals. Kerchev~\cite{filtration} studied the filtration of a free algebra by its associative lower
central series.  We refer to~\cite{BJ,pro-com} and the references therein for a detailed history and overview of this direction.  It is worth mentioning that there is also a series of interesting results on the solvability and nilpotency of Poisson algebras with respect to their Lie structures~\cite{poi1,poi2,poi3}. In light of these notes, it is natural and interesting to study the commutator ideals of Lie-admissible algebras. Let us begin with the following easy observation.

\begin{lem}\label{lem-mni1}
Let~$\mA$ be a right commutative Lie-admissible algebra. Then we have $\mA_{[2]}\mA_{[\ii]}\subseteq \Id{\mA_{[\ii+1]}}$ for all integer~$\ii\geq 1$. In particular, for every Novikov algebra or bicommutative algebra~$\mA$, we have $\mA_{[2]}\mA_{[\ii]}\subseteq \Id{\mA_{[\ii+1]}}$ for all integer~$\ii\geq 1$.
\end{lem}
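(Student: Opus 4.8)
The plan is to prove $\mA_{[2]}\mA_{[\ii]}\subseteq \Id{\mA_{[\ii+1]}}$ by a direct computation using the two-sided Alia identities from Lemma~\ref{lem-id}, which are available since $\mA$ is right commutative and Lie-admissible. Fix generators: a typical element of $\mA_{[2]}$ is $[\xx,\yy]=\xx\yy-\yy\xx$, a typical element of $\mA_{[\ii]}$ is $\uu\in\mA_{[\ii]}$, and I want to show $[\xx,\yy]\uu\in\Id{\mA_{[\ii+1]}}$. The key point is that $[\xx,\uu]$ and $[\yy,\uu]$ both lie in $\mA_{[\ii+1]}$ (as $\mA_{[\ii]}$ is a Lie ideal), hence in $\Id{\mA_{[\ii+1]}}$, and so is any product of an element of $\mA$ with such an element. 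So the strategy is to rewrite $[\xx,\yy]\uu$ as an $\mathbb{F}$-linear combination of terms each of which is visibly of the form $\mA\bigl(\mA_{[\ii+1]}\bigr)$, $\bigl(\mA_{[\ii+1]}\bigr)\mA$, or an element of $\mA_{[\ii+1]}$ itself — all of which lie in $\Id{\mA_{[\ii+1]}}$ (for the middle one use that the ideal is closed, and recall from Lemma~\ref{com-id}/Lemma~\ref{lem-mni} that $\Id{\mA_{[\ii+1]}}=\mA_{[\ii+1]}+\mA\mA_{[\ii+1]}$; to absorb a right multiplication one uses right commutativity: $(\xx v)\yy=(\xx\yy)v$ for $v\in\mA_{[\ii+1]}$).

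Concretely, first I would apply the first Alia identity $[\xx,\yy]\zz+[\yy,\zz]\xx+[\zz,\xx]\yy=0$ with $\zz=\uu$, giving
\[
[\xx,\yy]\uu=-[\yy,\uu]\xx-[\uu,\xx]\yy=[\uu,\yy]\xx+[\xx,\uu]\yy.
\]
Now each summand on the right has the shape $w\,v$ with $v\in\{\xx,\yy\}\subseteq\mA$ and $w\in\mA_{[\ii+1]}$; that is, it is of the form $\mA_{[\ii+1]}\cdot\mA$. So it remains to check that $\mA_{[\ii+1]}\mA\subseteq\Id{\mA_{[\ii+1]}}$, which is immediate because $\Id{\mA_{[\ii+1]}}$ is by definition an ideal containing $\mA_{[\ii+1]}$. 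This already finishes the argument, and the ``In particular'' clause follows since Novikov and bicommutative algebras are right commutative and Lie-admissible.

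I expect essentially no obstacle here: the only thing to be careful about is making sure the Alia identity is applied in the correct position (it must be the identity $\xx[\yy,\zz]+\yy[\zz,\xx]+\zz[\xx,\yy]=0$ or $[\xx,\yy]\zz+[\yy,\zz]\xx+[\zz,\xx]\yy=0$, not a mix), and to note explicitly that $\uu$ ranges over a spanning set of $\mA_{[\ii]}$ so the linearity reduction is valid. One could equivalently bypass Lemma~\ref{lem-id} and argue: $[\xx,\yy]\uu=([\xx,\yy]\uu-\uu[\xx,\yy])+\uu[\xx,\yy]=[[\xx,\yy],\uu]+\uu[\xx,\yy]$, where $[[\xx,\yy],\uu]\in\mA_{[\ii+2]}\subseteq\mA_{[\ii+1]}$ and $\uu[\xx,\yy]\in\mA\,\mA_{[\ii+1]}$ since $[\xx,\yy]\in\mA_{[2]}$ and hence $[\xx,\yy]=$ —wait, that needs $[\xx,\yy]\in\mA_{[\ii+1]}$, which is false in general — so the clean route really is via the Alia identity as above, and that is the version I would write up. The whole proof is two lines, so I would present it as a short displayed computation followed by the remark that $\mA_{[\ii+1]}\mA\subseteq\Id{\mA_{[\ii+1]}}$.
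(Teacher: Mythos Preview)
Your proof is correct and essentially identical to the paper's: both apply the Alia identity $[\xx,\yy]\zz+[\yy,\zz]\xx+[\zz,\xx]\yy=0$ from Lemma~\ref{lem-id} with $\zz=\uu\in\mA_{[\ii]}$ to write $[\xx,\yy]\uu=-[\yy,\uu]\xx-[\uu,\xx]\yy\in\mA_{[\ii+1]}\mA\subseteq\Id{\mA_{[\ii+1]}}$. The surrounding discussion in your proposal (about absorbing right multiplications via Lemma~\ref{com-id}, and the discarded alternative route) is unnecessary and can be dropped from the final write-up.
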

\begin{proof}
For all~$\aa,\bb\in\mA$, for all~$\cc_\ii\in\mA_{[\ii]}$, by Lemma~\ref{lem-id}, we have
$$
[\aa,\bb]\cc_\ii=-[\bb,\cc_\ii]\aa-[\cc_\ii,\aa]\bb
 \in \Id{\mA_{[\ii+1]}},$$
 thus finishing the proof.
\end{proof}

Recall that for an arbitrary Lie-admissible algebra, we have~$H_1=\mA$ and~$H_{\ii+1}=\mA\circ H_\ii$ for all~$\ii\geq 1$.  In the following theorem we show that~$\Id{\mA_{[\ii]}}= H_\ii$ if~$\mA$ is either a Novikov algebra or a bicommutative algebra.  In particular, Theorem~\ref{th-pro} then describes the products of the commutator ideals of~$\mA$, which is in general not true for Lie nilpotent associative algebras.

\begin{thm}\label{prod-com-id}
Let~$\mA$ be either a Novikov algebra or a bicommutative algebra. Define a series of ideals by the rule
$H_1=\mA$, $H_{\ii+1}=\mA \circ H_\ii \mbox{ for all } \ii\geq 1$. Then we have~$\Id{\mA_{[\ii]}}=H_\ii$ for all integer~$\ii\geq 1$. In particular, we have~$\Id{\mA_{[\pp]}}\Id{\mA_{[\qq]}}\subseteq \Id{\mA_{[\pp+\qq-1]}}$ for all integers~$\pp,\qq\geq 1$.
\end{thm}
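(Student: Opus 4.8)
The plan is to prove $\Id{\mA_{[\ii]}}=H_\ii$ by induction on $\ii$, and then the ``in particular'' part will be an immediate consequence of Theorem~\ref{th-pro}, since $\Id{\mA_{[\pp]}}\Id{\mA_{[\qq]}}=H_\pp H_\qq\subseteq H_{\pp+\qq-1}=\Id{\mA_{[\pp+\qq-1]}}$. Note that the hypotheses of Lemma~\ref{circ-pro} are satisfied with the choice $A_\ii=H_\ii$: indeed $\mA\circ H_\ii=H_{\ii+1}$ by definition, so once we have the identification the machinery of Section~\ref{sec-central-ideal} applies verbatim.

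For the base case $\ii=1$ we have $\mA_{[1]}=\mA$, hence $\Id{\mA_{[1]}}=\mA=H_1$. For the inductive step, assume $\Id{\mA_{[\ii]}}=H_\ii$. By definition $H_{\ii+1}=\mA\circ H_\ii=\Id{[\mA,H_\ii]}$, and since $\mA_{[\ii+1]}=[\mA,\mA_{[\ii]}]\subseteq[\mA,H_\ii]$, we get $\Id{\mA_{[\ii+1]}}\subseteq H_{\ii+1}$ at once. The reverse inclusion is the substantive direction: I must show $H_{\ii+1}\subseteq \Id{\mA_{[\ii+1]}}$, equivalently $[\mA,H_\ii]\subseteq \Id{\mA_{[\ii+1]}}$. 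Here I would use Lemma~\ref{lem-mni}, which gives $H_\ii=\Id{\mA_{[\ii]}}=\mA_{[\ii]}+\mA\mA_{[\ii]}$. So a general generator of $H_\ii$ is of the form $\cc_\ii$ or $\xx\cc_\ii$ with $\cc_\ii\in\mA_{[\ii]}$, $\xx\in\mA$, and I need $[\yy,\cc_\ii]\in\Id{\mA_{[\ii+1]}}$ and $[\yy,\xx\cc_\ii]\in\Id{\mA_{[\ii+1]}}$ for all $\yy\in\mA$. The first is immediate since $[\yy,\cc_\ii]\in\mA_{[\ii+1]}$. The second is where the work lies.

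The key computation is to expand $[\yy,\xx\cc_\ii]=\yy(\xx\cc_\ii)-(\xx\cc_\ii)\yy$ and show both terms lie in $\Id{\mA_{[\ii+1]}}$ modulo things already known to lie there. For $(\xx\cc_\ii)\yy$, right commutativity gives $(\xx\cc_\ii)\yy=(\xx\yy)\cc_\ii$, and then Lemma~\ref{lem-id} (the two-sided Alia identity $\xx'[\cc_\ii,\text{--}]+\cdots$, applied in the form $[\uu,\vv]\cc_\ii=-[\vv,\cc_\ii]\uu-[\cc_\ii,\uu]\vv$) together with Lemma~\ref{lem-mni1}, which says precisely $\mA_{[2]}\mA_{[\ii]}\subseteq\Id{\mA_{[\ii+1]}}$, should absorb it; more directly, $(\xx\yy)\cc_\ii-\cc_\ii(\xx\yy)=[\xx\yy,\cc_\ii]\in\mA_{[\ii+1]}$, so it suffices to control $\cc_\ii(\xx\yy)$, i.e.\ terms of the form $\cc_\ii\zz$ with $\zz\in\mA$. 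For $\yy(\xx\cc_\ii)$ and for $\cc_\ii\zz$ I would split into the Novikov and bicommutative cases and run essentially the same manipulations as in the proof of Lemma~\ref{circ-pro} Case~2 (using left symmetry, resp.\ left commutativity), rewriting the non-commutator monomials until every summand is either a commutator $[\,\cdot\,,\cc_\ii]\in\mA_{[\ii+1]}$, a left multiple $\mA\cdot[\,\cdot\,,\cc_\ii]\subseteq\mA\mA_{[\ii+1]}$, or an element of $\mA_{[2]}\mA_{[\ii]}\subseteq\Id{\mA_{[\ii+1]}}$ covered by Lemma~\ref{lem-mni1}. I expect the main obstacle to be purely bookkeeping: ensuring that in the Novikov case the chain of substitutions (right commutativity, then left symmetry, then re-expressing products as commutators plus left multiples) closes up inside $\Id{\mA_{[\ii+1]}}=\mA_{[\ii+1]}+\mA\mA_{[\ii+1]}$ without generating a stray monomial of the form $\cc_\ii(\xx\yy)$ that cannot be reabsorbed — but this is exactly the pattern already handled in Lemma~\ref{circ-pro}, so the calculation should go through with the same techniques.
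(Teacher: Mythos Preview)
Your plan is correct and matches the paper's proof: both reduce to showing $[\mA,\mA\mA_{[\ii]}]\subseteq\Id{\mA_{[\ii+1]}}$ via Lemma~\ref{lem-mni}, then carry out a direct computation splitting into the Novikov and bicommutative cases, and finish with Theorem~\ref{th-pro}. One remark: your intermediate step of splitting $[\yy,\xx\cc_\ii]$ into $\yy(\xx\cc_\ii)$ and $(\xx\cc_\ii)\yy$ and trying to place each separately in $\Id{\mA_{[\ii+1]}}$ is a detour that will not close up (neither piece lies there individually); you should compute the commutator as a whole, exactly as in Lemma~\ref{circ-pro} Case~2, which you correctly cite as the template --- that computation yields $[\yy\cc_\ii,\aa]=[\cc_\ii,\aa]\yy+\yy[\cc_\ii,\aa]-[\cc_\ii,\yy\aa]$ (Novikov) and $[\yy\cc_\ii,\aa]=[\yy\aa,\cc_\ii]+\yy[\cc_\ii,\aa]$ (bicommutative), each term landing in $\Id{\mA_{[\ii+1]}}$ directly. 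The paper redoes this calculation from scratch rather than invoking Lemma~\ref{circ-pro}, but the manipulations are the same.
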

\begin{proof}
By induction on~$\ii$, it is straightforward to show~$\Id{\mA_{[\ii]}}\subseteq H_\ii$. So it suffices to prove that~$H_\ii\subseteq\Id{\mA_{[\ii]}}$ for all integer~$\ii\geq 1$, and we shall use induction on~$\ii$ to prove this claim. For~$\ii\leq 2$, it is clear. Assume that~$\ii\geq 3$. By induction hypothesis and  by Lemma~\ref{lem-mni}, $H_\ii$ is generated by
$$[\mA, H_{\ii-1}]=[\mA, \mA_{[\ii-1]}]+[\mA, \mA\mA_{[\ii-1]}]
=\mA_{[\ii]}+[\mA, \mA\mA_{[\ii-1]}].$$
Therefore, it suffices to show~$[\mA, \mA\mA_{[\ii-1]}]\subseteq \Id{\mA_{[\ii]}}$.

  If~$\mA$ is a Novikov algebra, then for all~$\aa,\bb,\cc\in \mA$ and~$\dd_{\ii-2}\in\mA_{[\ii-2]}$, by Lemma \ref{lem-mni1}, we  deduce
\begin{align*}
[\aa, \bb[\cc,\dd_{\ii-2}]]
=&\aa(\bb[\cc,\dd_{\ii-2}])-(\bb[\cc,\dd_{\ii-2}])\aa&\\
=&\aa([\bb,[\cc,\dd_{\ii-2}]]
+[\cc,\dd_{\ii-2}]\bb)-(\bb\aa)[\cc,\dd_{\ii-2}]&\\
=&\aa[\bb,[\cc,\dd_{\ii-2}]]
+\aa([\cc,\dd_{\ii-2}]\bb)-(\bb\aa)[\cc,\dd_{\ii-2}]&\\
=&\aa[\bb,[\cc,\dd_{\ii-2}]]
+(\aa[\cc,\dd_{\ii-2}])\bb+[\cc,\dd_{\ii-2}](\aa\bb)
-([\cc,\dd_{\ii-2}]\aa)\bb-(\bb\aa)[\cc,\dd_{\ii-2}]&\\
=&\aa[\bb,[\cc,\dd_{\ii-2}]]
+(\aa\bb)[\cc,\dd_{\ii-2}]-(\bb\aa)[\cc,\dd_{\ii-2}]
+[\cc,\dd_{\ii-2}](\aa\bb)-([\cc,\dd_{\ii-2}]\aa)b&\\
=&\aa[\bb,[\cc,\dd_{\ii-2}]]
+[\aa,\bb][\cc,\dd_{\ii-2}]
+[\cc,\dd_{\ii-2}](\aa\bb)-(\aa[\cc,\dd_{\ii-2}])b
+[\aa,[\cc,\dd_{\ii-2}]]b&\\
=&\aa[\bb,[\cc,\dd_{\ii-2}]]
+[\aa,\bb][\cc,\dd_{\ii-2}]
+[\cc,\dd_{\ii-2}](\aa\bb)-(\aa\bb)[\cc,\dd_{\ii-2}]
+[\aa,[\cc,\dd_{\ii-2}]]b&\\
=&\aa[\bb,[\cc,\dd_{\ii-2}]]
+[\aa,\bb][\cc,\dd_{\ii-2}]
-[\aa\bb, [\cc,\dd_{\ii-2}]]
+[\aa,[\cc,\dd_{\ii-2}]]b&\\
\in &\Id{\mA_{[\ii]}} +\mA_{[2]}\mA_{[\ii-1]}\subseteq \Id{\mA_{[\ii]}}.&
\end{align*}

 If~$\mA$ is a bicommutative algebra, then for all~$\aa,\bb\in \mA$ and~$\dd_{\ii-1}\in\mA_{[\ii-1]}$, we  deduce
\begin{align*}
[\aa, \bb\dd_{\ii-1}]
=& \aa(\bb\dd_{\ii-1})-(\bb\dd_{\ii-1})\aa&\\
=& \bb(\aa\dd_{\ii-1})-(\bb\aa)\dd_{\ii-1}&\\
 =&\bb(\aa\dd_{\ii-1})-\bb(\dd_{\ii-1}\aa)
 +\bb(\dd_{\ii-1}\aa)-(\bb\aa)\dd_{\ii-1}&\\
 =&\bb[\aa,\dd_{\ii-1}]
 +\dd_{\ii-1}(\bb\aa)-(\bb\aa)\dd_{\ii-1}&\\
 =&\bb[\aa,\dd_{\ii-1}]
 -[\bb\aa,\dd_{\ii-1}]&\\
\in & \Id{\mA_{[\ii]}}.&
\end{align*}
Therefore, we have~$\Id{\mA_{[\ii]}}=H_\ii$ for all integer~$\ii\geq 1$.

Finally,  by Theorem~\ref{th-pro}, we have
$$\Id{\mA_{[\pp]}}\Id{\mA_{[\qq]}}=H_pH_q\subseteq H_{p+q-1}= \Id{\mA_{[\pp+\qq-1]}},$$ thus finishing the proof.
\end{proof}

Consequently, we obtain the main theorem of this section, which shows that the notions of ``finite class" and ``Lie nilpotency" for Novikov algebras and bicommutative algebras are the same.

\begin{thm}\label{nil-finclass}
Let~$\mA$ be either a Novikov algebra or a bicommutative algebra. Then~$\mA$ is Lie nilpotent if and only if~$\mA$ is of finite class. Consequently, the ideal of~$\mA$ generated by~$\{ab-ba\mid \aa,\bb\in\mA\}$ is nilpotent  if~$\mA$ is Lie nilpotent. 
\end{thm}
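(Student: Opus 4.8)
The plan is to combine Theorem~\ref{prod-com-id} (or equivalently the identification $\Id{\mA_{[\ii]}}=H_\ii$) with Theorem~\ref{th-pro}. First I would prove the equivalence of ``Lie nilpotent'' and ``finite class''. One direction is immediate: if $\mA$ is of finite class, then $H_n=(0)$ for some $n$, and since a straightforward induction on $\ii$ gives $\mA_{[\ii]}\subseteq\Id{\mA_{[\ii]}}=H_\ii$ (using $\mA_{[\ii+1]}=[\mA,\mA_{[\ii]}]\subseteq[\mA,H_\ii]\subseteq\mA\circ H_\ii=H_{\ii+1}$), we get $\mA_{[n]}=(0)$, so $\mA$ is Lie nilpotent. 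Conversely, suppose $\mA$ is Lie nilpotent, say $\mA_{[m]}=(0)$. By Theorem~\ref{prod-com-id} we have $H_\ii=\Id{\mA_{[\ii]}}$ for all $\ii\geq 1$, and in particular $H_m=\Id{\mA_{[m]}}=\Id{(0)}=(0)$, so $\mA$ is of finite class.

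Once the equivalence is established, the final assertion follows directly: if $\mA$ is Lie nilpotent, then it is of finite class, so by the second statement of Theorem~\ref{th-pro} the commutator ideal $\mA\circ\mA$ is nilpotent (of nilpotent index at most the class of $\mA$). Since $\mA\circ\mA=\Id{[\mA,\mA]}=\Id{\{ab-ba\mid a,b\in\mA\}}$ by definition, this is exactly the ideal generated by $\{ab-ba\mid a,b\in\mA\}$, and the proof is complete. It is worth noting that the hypothesis to invoke in this last step is ``finite class,'' which is why the equivalence must come first; this is the analogue of Jennings's result cited in the introduction.

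I do not anticipate a genuine obstacle here, since all the hard work has been done in the preceding theorems. The only point requiring a little care is the easy induction $\Id{\mA_{[\ii]}}\subseteq H_\ii$ (used both in the ``finite class $\Rightarrow$ Lie nilpotent'' direction and inside the proof of Theorem~\ref{prod-com-id}): one checks $\Id{\mA_{[1]}}=\Id{\mA}=\mA=H_1$ and $\Id{\mA_{[2]}}=\mA\circ\mA=H_2$, and for $\ii\geq 2$, since $\mA_{[\ii+1]}=[\mA,\mA_{[\ii]}]\subseteq[\mA,\Id{\mA_{[\ii]}}]\subseteq[\mA,H_\ii]$, we get $\Id{\mA_{[\ii+1]}}\subseteq\Id{[\mA,H_\ii]}=\mA\circ H_\ii=H_{\ii+1}$. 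Everything else is a direct citation of Theorems~\ref{th-pro} and~\ref{prod-com-id}.
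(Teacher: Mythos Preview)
Your proposal is correct and follows essentially the same approach as the paper: both deduce the equivalence directly from the identification $\Id{\mA_{[\ii]}}=H_\ii$ of Theorem~\ref{prod-com-id}, and then invoke Theorem~\ref{th-pro} for the nilpotency of $\mA\circ\mA$. You simply spell out in more detail the (easy) passages that the paper leaves implicit.
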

\begin{proof} By Theorem~\ref{prod-com-id}, we have~$\Id{\mA_{[\ii]}}=H_\ii$ for all integer~$\ii\geq 1$. It follows that $\mA$ is Lie nilpotent if and only if $\mA$ is of finite class. By Theorem~\ref{th-pro}, the second claim follows, thus finishing the proof.
\end{proof}

Theorems~\ref{prod-com-id} and~\ref{nil-finclass} indicates that bicommutative algebras and Novikov algebras possess lots of close properties.  We also note that  over a field of characteristic $\neq 2$, a Novikov algebra  $\mA$ is Lie-solvable if and only if the ideal of~$\mA$ generated by~$\{ab-ba\mid \aa,\bb\in\mA\}$ is right nilpotent~\cite{TUZ}. But an analog does not hold for Lie-solvable bicommutative algebras in general. It is well-known that for every bicommutative algebra~$\mB$,
 the subalgebra~$\mB^2=\{ab\mid a,b\in \mB\}$ is commutative and associative~\cite{DIT11}. So~$\mB$ is Lie-metabelian, namely, for all~$a,b,c,d\in \mB$, we have~$[[a,b],[c,d]]=0$. However, it is easy to see that for the free bicommutative algebra~$\mB$ generated by more than two elements,  the ideal of~$\mB$ generated by~$\{ab-ba\mid \aa,\bb\in\mB\}$ is not right/left nilpotent.

  We conclude this section with an example constructed by  S. Pchelintsev, which shows that some conditions are missing in~\cite[Theorem 6.6]{jen1}.  More precisely,
it is claimed~\cite[Theorem 6.6]{jen1} that an associative algebra is Lie nilpotent if and only if it is of finite class, but the following example shows that this theorem does not hold in general.

 \begin{exam}\label{ex-gra}(S. Pchelintsev)
   Let~$\mA$ be the Grassmann algebra
 generated by the infinite set~$\{e_i \mid i\in \mathbb{N}\}$ over a field of characteristic not 2. Then~$\mA$ is Lie nilpotent of index 3, namely, $[[\mA,\mA], \mA]=0$, but~$ \mA$ is not of finite class and $\mA \circ \mA$ is not nilpotent.
 \end{exam}

\begin{proof}
  Note that the set $X:=\{e_{i_1}\dots e_{i_n}\mid i_1<\cdots < i_n, n \geq 1\}$ forms a linear basis of~$\mA$ (if we assume that $\mA$ is non-unital). Moreover,  elements in~$X$ of the form~$e_{i_1}\dots e_{i_{2n}}$ lies in the center of~$\mA$. It follows immediately that~$[[\mA,\mA], \mA]=0$.

  We claim that~$e_1e_2...e_{2n}$ lies in~$H_{n+1}$, where $H_{n+1}$ is defined in Definition~\ref{defi-fc}. For~$n=1$, we have~$[e_1,e_2]=e_1e_2-e_2e_1=2e_1e_2\in H_2$. Since the characteristic of the underlying field is not 2, it follows that~$e_1e_2\in H_2$. By induction hypothesis, $e_1e_2...e_{2n}\in H_{n+1}$ and thus~$e_1e_2...e_{2n}e_{2n+1}\in H_{n+1}$. It follows that $[e_1e_2...e_{2n}e_{2n+1},e_{2n+2}]\in H_{n+2}$ and thus~$e_1e_2...e_{2n}e_{2n+1}e_{2n+2}\in H_{n+2}$. Finally, note that~$e_{2i-1}e_{2i}=(1/2)[e_{2i-1},e_{2i}]\in \mA\circ \mA$ and~$e_1e_2...e_{2n}\neq 0$. The result follows.
\end{proof}

\section{Commutator ideals of assosymmetric algebras}\label{sec-4}
In this section we mainly study assosymmetric algebras of finite class and study commutator ideals of assosymmetric algebras. We show that some of the properties for associative algebras also hold for assosymmetric algebras, namely, for such properties the associativity is not necessary and can be replaced by left symmetry and right symmetry.

\subsection{Assosymmetric algebras of finite class} The aim of this subsection is to study assosymmetric algbras of finite class. We shall show that  assosymmetric algbras of finite class have properties similar to that of associative algebras.  Since the associativity does not hold for assosymmetric algebras, new techniques are needed. Recall that for all~$x,y,z$ in an algebra~$\mA$, the associator $(x,y,z)$ means~$(xy)z-x(yz)$. So in every assosymmetric algebra~$\mA$, we have~$(y,x,z)=(x,y,z)=(x,z,y)$ for all~$x,y,z\in \mA$.

From now on, $\mA$ always means an assosymmetric algebra over a field~$F$.
It is proved in~\cite{Kl57} that for all~$x,y,z,w$ in an assosymmetric algebra~$\mA$,  we have\begin{equation}\label{eq3.1.4}
([x,y],z,w)=0 \mbox{  if } \mathsf{char}(F)\neq 2, 3.
\end{equation}
 By the same technique developed in~\cite{Kl57}, we obtain some more identities as follows when~$\mathsf{char}(F)=2$ or $3$.
\begin{lem}
For all~$x,y,z,w$ in an assosymmetric algebra~$\mA$, we have
\begin{equation}\label{eq3.1.1}
  (x,y,z)=-[x,y]z+x[y,z]+[xz,y];
\end{equation}
\begin{equation}\label{eq3.1.2}
([w,x],y,z)=[w,(x,y,z)]+[x,(w,y,z)] \mbox{  if } \mathsf{char}(F)=2;
\end{equation}
\begin{equation}\label{eq3.1.3}
[xy,z]=-[yz,x]-[zx,y] \mbox{  if } \mathsf{char}(F)=3;
\end{equation}
\end{lem}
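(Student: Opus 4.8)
The plan is to regard~\eqref{eq3.1.1} as a ``master'' identity and to deduce~\eqref{eq3.1.2} and~\eqref{eq3.1.3} from it by linearization together with the relevant characteristic hypothesis, in the spirit of Kleinfeld's treatment of~\eqref{eq3.1.4} in~\cite{Kl57}. The common starting point is the total symmetry of the associator, $(y,x,z)=(x,y,z)=(x,z,y)$, which is merely a restatement of~\eqref{leftsym} and~\eqref{rightsym}.

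For~\eqref{eq3.1.1} I would simply expand the right-hand side,
\[
-[x,y]z+x[y,z]+[xz,y]=-(xy)z+(yx)z+x(yz)-x(zy)+(xz)y-y(xz),
\]
regroup the six terms as $\big((yx)z-y(xz)\big)+\big((xz)y-x(zy)\big)-\big((xy)z-x(yz)\big)$, recognize this as $(y,x,z)+(x,z,y)-(x,y,z)$, and apply total symmetry to obtain $(x,y,z)+(x,y,z)-(x,y,z)=(x,y,z)$; no restriction on the characteristic is needed. For~\eqref{eq3.1.3} I would form the cyclic sum $[xy,z]+[yz,x]+[zx,y]$, regroup it as $\big((xy)z-x(yz)\big)+\big((yz)x-y(zx)\big)+\big((zx)y-z(xy)\big)=(x,y,z)+(y,z,x)+(z,x,y)$, which by total symmetry equals $3(x,y,z)$; when $\mathsf{char}(\mathbb F)=3$ this vanishes, which is exactly~\eqref{eq3.1.3}.

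The substantive part is~\eqref{eq3.1.2}. Since the associator is linear in each argument, $([w,x],y,z)=(wx,y,z)-(xw,y,z)$, and the plan is to expand both $(wx,y,z)$ and $(xw,y,z)$ by the cocycle (Teichm\"uller) identity $(ab,c,d)=a(b,c,d)+(a,b,c)d-(a,b,cd)+(a,bc,d)$, which holds in every nonassociative algebra. Total symmetry then cancels the terms $\big((w,x,y)-(x,w,y)\big)z$ and $-\big((w,x,yz)-(x,w,yz)\big)$, leaving $([w,x],y,z)=w(x,y,z)-x(w,y,z)+(w,xy,z)-(x,wy,z)$. It then remains to evaluate $(w,xy,z)-(x,wy,z)$: I would permute arguments by total symmetry, apply the cocycle identity again to resolve the product sitting in the middle slot, iterate, and finally use $\mathsf{char}(\mathbb F)=2$ to delete every term with even coefficient and~\eqref{eq3.1.1} to convert the surviving associators of products back into commutators, landing on $[w,(x,y,z)]+[x,(w,y,z)]$.

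This last reduction is the main obstacle. A naive alternation of the cocycle identity with total symmetry tends to cycle back to the expression one started from, so the computation has to be organized carefully: for instance by expanding $([w,x],y,z)$ along two inequivalent cocycle routes --- one splitting $wx$ in the first slot, one splitting it inside the middle slot after a symmetry permutation --- and subtracting, which yields a genuinely new relation among the length-four associator monomials; or, equivalently, by rewriting every such monomial in a fixed normal form dictated by~\eqref{leftsym} and~\eqref{rightsym}. The characteristic-$2$ hypothesis is exactly what makes the surviving terms collapse onto $[w,(x,y,z)]+[x,(w,y,z)]$. This bookkeeping is routine but is the technical heart of the lemma, and it runs on the same device Kleinfeld uses in~\cite{Kl57} to establish~\eqref{eq3.1.4}.
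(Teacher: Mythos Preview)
Your proofs of \eqref{eq3.1.1} and \eqref{eq3.1.3} are correct and coincide with the paper's: expand, regroup into associators, and invoke total symmetry.

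For \eqref{eq3.1.2} there is a genuine gap. You correctly reach
\[
([w,x],y,z)=w(x,y,z)-x(w,y,z)+(w,xy,z)-(x,wy,z),
\]
but the reduction of $(w,xy,z)-(x,wy,z)$ is not carried out; you call it ``routine bookkeeping'' while also conceding that the naive iteration cycles. That is the whole content of this case, and without it the argument is incomplete. The paper avoids the cycling by working with Kleinfeld's auxiliary function
\[
f(w,x,y,z)=(wx,y,z)-x(w,y,z)-(x,y,z)w.
\]
Two facts are established: $f(w,x,y,z)=f(w,x,z,y)$ (immediate from right symmetry) and $f(w,x,y,z)+f(z,w,x,y)=0$ (a short computation using the Teichm\"uller identity \eqref{eq-any-ring} and total symmetry). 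Composing these yields $f(w,x,y,z)=f(x,w,y,z)$. Hence in characteristic~$2$,
\[
0=2f(w,x,y,z)=f(w,x,y,z)+f(x,w,y,z)=([w,x],y,z)+[w,(x,y,z)]+[x,(w,y,z)],
\]
which is exactly \eqref{eq3.1.2}. This is precisely the ``two inequivalent routes'' idea you gesture at, but packaged so that nothing has to be iterated: the symmetry $f(w,x,y,z)=f(x,w,y,z)$ is the single nontrivial relation needed, and the characteristic hypothesis enters only at the very last line. I would recommend replacing your sketch for \eqref{eq3.1.2} with this $f$-argument.
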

\begin{proof}
\ITEM1 Proof of identity~\eqref{eq3.1.1}. Note that
\begin{align*}
  (x,y,z)&=-(x,y,z)+(y,x,z)+(x,z,y)&\\
  &=-(xy)z+x(yz)+(yx)z-y(xz)+(xz)y-x(zy)&\\
    &=-(xy)z+(yx)z+x(yz)-x(zy)+(xz)y-y(xz)&\\
    &=-[x,y]z+x[y,z]+[xz,y].
\end{align*}
  The proof of identity~\eqref{eq3.1.1} is completed.

\ITEM2 Proof of identity~\eqref{eq3.1.2}.  Following~\cite{Kl57}, we define
$$f(w,x,y,z)=(wx,y,z)-x(w,y,z)-(x,y,z)w.$$
Then it is obvious that
\begin{equation}\label{eq-fyz}
   f(w,x,y,z)=f(w,x,z,y).
\end{equation}
We also note that~\cite{Kl57} in any algebra we have
\begin{equation}\label{eq-any-ring}
 (wx,y,z)-(w,xy,z)+(w,x,yz)=w(x,y,z)+(w,x,y)z.
 \end{equation}
 By identity~\eqref{eq-any-ring}, we deduce
\begin{align*}
  &f(w,x,y,z)+f(z,w,x,y)&\\
  =&(wx,y,z)-x(w,y,z)-(x,y,z)w+(zw,x,y)-w(z,x,y)-(w,x,y)z&\\
  =&(wx,y,z)-x(w,y,z)-(x,y,z)w+(zw,x,y)-(wx,y,z)+(w,xy,z)-(w,x,yz)&\\
  =&-x(w,y,z)-(x,y,z)w+(zw,x,y)+(w,xy,z)-(w,x,yz)&\\
  =&-x(w,y,z)-(x,y,z)w+(xy,z,w)-(x,yz,w)+(x,y,zw)=0.&
\end{align*}
Combining this with identity~\eqref{eq-fyz}, we obtain
\begin{equation}\label{eq4.7}
f(w,x,y,z)=-f(z,w,x,y)=f(y,z,w,x),
\end{equation}
and thus
\begin{equation}\label{eq4.8}
f(w,x,y,z)=f(y,z,w,x)=f(y,z,x,w)=f(x,w,y,z).
\end{equation}
Therefore, if~$\mathsf{char}(F)=2$, we obtain
\begin{align*}
  0&=2f(w,x,y,z)&\\
  &=f(w,x,y,z)+f(x,w,y,z)&\\
  &=(wx,y,z)+x(w,y,z)+(x,y,z)w+(xw,y,z)+w(x,y,z)+(w,y,z)x&\\
   &=([w,x],y,z)+[w,(x,y,z)]+[x, (w,y,z)].&
  \end{align*}
  The proof of identity~\eqref{eq3.1.2} is completed.

\ITEM3 Proof of identity~\eqref{eq3.1.3}. If~$\mathsf{char}(F)=3$, then we have
\begin{align*}
 [xy,z]+[yz,x]+[zx,y]
&=(xy)z-z(xy)+(yz)x-x(yz)+(zx)y-y(zx)  &\\
&=(x,y,z)+(y,z,x)+(z,x,y)=3(x,y,z)=0.  &
\end{align*}
Identity~\eqref{eq3.1.3} follows immediately.
\end{proof}

 Now we begin to study associators involving Lie ideals of an assosymmetric algebra~$\mA$.
 \begin{lem}\label{lem-ideal}
Let~$A$ and~$B$ be Lie ideals of~$\mA$. Then the following statements are true:

\ITEM1 For all~$x\in B$, $y,z\in \mA$,
$(y,x,z)\in \mA[B,\mA]+[B,\mA]$;
In particular, $(y,x,z)$ is contained in the ideal of~$\mA$ generated by $[B, \mA]$;

\ITEM2 $A\circ B=[A,B] +\mA[A,B]=[A,B]+[A,B]\mA$.
 \end{lem}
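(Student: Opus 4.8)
The plan is to prove the two statements in order, using identity~\eqref{eq3.1.1} as the main engine together with the symmetry relations $(x,y,z)=(y,x,z)=(x,z,y)$ that hold in any assosymmetric algebra. For \ITEM1, I would start from identity~\eqref{eq3.1.1}, which reads $(x,y,z)=-[x,y]z+x[y,z]+[xz,y]$. Applying it with the middle entry taken from $B$ and using the symmetry of the associator to move the $B$-entry into the first slot, I get $(y,x,z)=(x,y,z)=-[x,y]z+x[y,z]+[xz,y]$ for $x\in B$, $y,z\in\mA$. Now since $B$ is a Lie ideal, $[x,y]\in B\subseteq$ wait — more precisely $[x,y]\in[B,\mA]$, $[y,z]$ is arbitrary in $\mA$ so $x[y,z]\in\mA$ is \emph{not} obviously in the target space; I need to be more careful and instead feed the $B$-element into the \emph{last} slot, using $(y,x,z)=(y,z,x)$, giving $(y,z,x)=-[y,z]x+y[z,x]+[yx,z]$. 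Then $[y,z]x$ — still no. The clean route is: use symmetry to put $x\in B$ in the first slot, $(y,x,z)=(x,y,z)$, and expand a \emph{different} associator whose first entry is in $B$; from \eqref{eq3.1.1}, $(x,y,z)=-[x,y]z+x[y,z]+[xz,y]$. Here $[x,y]\in[B,\mA]$ so $[x,y]z\in[B,\mA]\mA\subseteq\mathrm{Id}([B,\mA])$; $[xz,y]$ — since $B$ is only a Lie ideal, $xz$ need not lie in $B$, so I must instead exploit that $[xz,y]=[x z,y]$ and rewrite via the Jacobi/Alia-type manipulations, or better, observe $x[y,z]$: as $B$ is a Lie ideal we have $[x,z]\in B$, so writing $x[y,z]$ is not enough. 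The correct and economical argument is to expand $(y,x,z)$ \emph{twice}, once as $(x,y,z)$ and once as $(x,z,y)$, add them, and cancel the stray terms; since $B$ is a Lie ideal each commutator bracket involving $x$ stays in $B$, and every surviving term is visibly of the form $\mA[B,\mA]$ or $[B,\mA]$. I expect the bookkeeping here to be the main obstacle: the catch is that $B$ is only a Lie ideal (not a two-sided ideal), so terms like $[xz,y]$ must be rearranged — via \eqref{eq3.1.1} applied again, or via the identity $[xz,y]=[x,y]z+x[z,y]+\text{(associators)}$ — until every bracket contains $x$ directly, and only then do Lie-ideal closure and the symmetry of associators finish the job. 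The ``in particular'' clause is then immediate since $\mathrm{Id}([B,\mA])$ contains $[B,\mA]$ and is closed under left multiplication by $\mA$.

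For \ITEM2, the inclusion $[A,B]+\mA[A,B]\subseteq A\circ B$ and $[A,B]+[A,B]\mA\subseteq A\circ B$ are trivial since $A\circ B=\mathrm{Id}([A,B])$. For the reverse, it suffices to show that each of $[A,B]+\mA[A,B]$ and $[A,B]+[A,B]\mA$ is a two-sided ideal of $\mA$; then each contains $\mathrm{Id}([A,B])=A\circ B$, forcing equality of all three spaces. To see $[A,B]+[A,B]\mA$ is an ideal: right multiplication is built in, so I only need left multiplication, i.e. $x([a,b]y)\in[A,B]+[A,B]\mA$ and $x[a,b]\in[A,B]+[A,B]\mA$ for $x,y\in\mA$, $a\in A$, $b\in B$. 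For $x[a,b]$, I use $x[a,b]=[a,b]x-[[a,b],x]$ and note $[[a,b],x]\in[A,B]$ because, by the Jacobi identity in the associated Lie algebra, $[[a,b],x]=[[a,x],b]+[a,[b,x]]$ with $[a,x]\in A$, $[b,x]\in B$ (both $A,B$ Lie ideals). For $x([a,b]y)$, I rewrite $x([a,b]y)=([a,b]y)x-[[a,b]y,x]$ wait — $([a,b]y)x$ is not manifestly in the space; instead I use the associator: $x([a,b]y)=(x[a,b])y-(x,[a,b],y)$, and by part \ITEM1 (with $B$ replaced by $[A,B]$, which is a Lie ideal since $A\circ B$ is an ideal — or more directly since $[[A,B],\mA]\subseteq[A,B]$ by the Jacobi computation just used) the associator $(x,[a,b],y)$ lies in $\mA[[A,B],\mA]+[[A,B],\mA]\subseteq\mA[A,B]+[A,B]$; and $x[a,b]\in[A,B]+[A,B]\mA$ by the previous step, so $(x[a,b])y\in([A,B]+[A,B]\mA)\mA\subseteq[A,B]\mA+([A,B]\mA)\mA$, and here I invoke that $([A,B]\mA)\mA\subseteq[A,B]\mA$ — which needs a short argument using right symmetry $(u,v,w)=(u,w,v)$, namely $(rs)t-r(st)=(rt)s-r(ts)$ lets me reduce $([a,b]r)s$ modulo $[A,B]\mA$ and lower-complexity terms. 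This last closure — showing $[A,B]\mA$ absorbs further right multiplication modulo $[A,B]$ — together with the parallel argument for $\mA[A,B]$ using left-type manipulations, is where the assosymmetric identities \eqref{eq3.1.1} and the symmetry of associators do the real work, and is the main obstacle in \ITEM2. Once both $[A,B]+\mA[A,B]$ and $[A,B]+[A,B]\mA$ are seen to be ideals containing $[A,B]$, they both equal $A\circ B$, completing the proof.
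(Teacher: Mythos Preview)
Your proposal has a genuine gap in part \ITEM1. All of your attempted routes first invoke the symmetry of the associator to move the $B$-element $x$ into the first or last slot, and then apply identity~\eqref{eq3.1.1}; as you noticed, this produces terms like $x[y,z]$ or $[xz,y]$ in which $x$ is buried inside a product, and the Lie-ideal hypothesis on $B$ gives no control over those. Your fallback plan (``expand twice and add'', or recursively rewrite $[xz,y]$) is not carried out and, in fact, does not lead anywhere short: since $(x,y,z)=(x,z,y)$, the two expansions coincide, so adding them gains nothing.

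The step you are missing is simply \emph{not} to use symmetry at all: apply \eqref{eq3.1.1} directly to $(y,x,z)$ with $x$ kept in the middle slot. This yields
\[
(y,x,z)=-[y,x]z+y[x,z]+[yz,x].
\]
Now every bracket already contains $x$ itself: $[x,z]\in[B,\mA]$ gives $y[x,z]\in\mA[B,\mA]$; $[yz,x]\in[\mA,B]=[B,\mA]$; and for the remaining term one writes $[y,x]z=z[y,x]+[[y,x],z]$, where $z[y,x]\in\mA[B,\mA]$ and, because $B$ is a Lie ideal, $[y,x]\in B$ so $[[y,x],z]\in[B,\mA]$. That is the entire argument.

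For part \ITEM2 your outline is closer, but you make it harder than necessary by trying to prove directly that $[A,B]+[A,B]\mA$ is a right ideal, which leads you into the unproved claim $([A,B]\mA)\mA\subseteq[A,B]+[A,B]\mA$. The paper avoids this: since $[A,B]$ is a Lie ideal (your Jacobi computation), one has $[A,B]\mA\subseteq\mA[A,B]+[A,B]$ and symmetrically, so the two candidate spaces are \emph{equal} a priori. It then suffices to show that $[A,B]+\mA[A,B]$ is an ideal, and this goes through cleanly: $x(y[a,b])=(xy)[a,b]-(x,[a,b],y)$ handles left multiplication via part~\ITEM1 applied to the Lie ideal $[A,B]$, and $(x[a,b])y=(x,[a,b],y)+x(y[a,b])+x[[a,b],y]$ handles right multiplication.
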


 \begin{proof}
\ITEM1  By identity~\eqref{eq3.1.1}, we deduce
\begin{align*}
(y,x,z)&=-[y,x]z+y[x,z]+[yz,x]    =-[[y,x],z]-z[y,x]+y[x,z]+[yz,x]&\\
   &\in \mA[B,\mA]+[B,\mA].&
\end{align*}
  The proof is completed.

  \ITEM2 Clearly, $[A,B]$ is an Lie ideal of~$\mA$. It follows that
  $$[A,B]\mA\subseteq \mA[A,B]+[[A,B],\mA]\subseteq\mA[A,B]+[A,B]. $$
  In particular, $[A,B] +\mA[A,B]$ is an ideal of~$\mA$ if and only if so does~$[A,B] +[A,B]\mA$.

 By~\ITEM1, for all~$x,y\in \mA$, $a\in A$, $b\in B$, we have
 $$(x,[a,b],y)\in [[A,B],\mA]+\mA[[A,B],\mA]\subseteq \mA[A,B]+[A,B].$$
 It follows that
    $$x(y[a,b])=(xy)[a,b]-(x,y,[a,b])=(xy)[a,b]-(x,[a,b],y) \in \mA[A,B]+[A,B].$$
 Therefore, we deduce
$$ (x[a,b])y=(x,[a,b],y)+x([a,b]y)
=(x,[a,b],y)+x(y[a,b])+x[[a,b],y]\in \mA[A,B]+[A,B].$$
The proof is completed.
 \end{proof}

\begin{coro}\label{coro-asso-a}
  Let~$\{A_p\mid p\geq 1\}$ be a family of ideals of~$\mA$ such that~$\mA\circ A_p\subseteq A_{p+1}$ for every~$p\geq 1$. Then for all~$a\in A_p$, for all~$x,y\in \mA$, we have~$(x,a,y)\in A_{p+1}$.
  \end{coro}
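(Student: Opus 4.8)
The plan is to deduce Corollary~\ref{coro-asso-a} directly from Lemma~\ref{lem-ideal}\ITEM1 by choosing the Lie ideals appropriately and then invoking the hypothesis on the chain. First I would observe that every ideal $A_p$ is in particular a Lie ideal of $\mA$, and that $\mA$ itself is a Lie ideal of $\mA$. So, fixing $p\geq 1$, I apply Lemma~\ref{lem-ideal}\ITEM1 with $B=A_p$ (and the ambient algebra $\mA$ playing its own role), which tells us that for all $a\in A_p$ and all $x,y\in\mA$ we have $(x,a,y)\in \mA[A_p,\mA]+[A_p,\mA]$. Note that the statement of Corollary~\ref{coro-asso-a} writes the associator as $(x,a,y)$ with $a$ in the middle slot, which is exactly the form $(y,x,z)$ appearing in Lemma~\ref{lem-ideal}\ITEM1 after renaming variables, so no symmetry identity is even needed here; but in any case $(x,a,y)=(a,x,y)=(x,y,a)$ by assosymmetry, so the placement of $a$ is immaterial.

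Next I would identify the right-hand side $\mA[A_p,\mA]+[A_p,\mA]$ with (a subspace of) the commutator ideal $\mA\circ A_p$. Indeed, by definition $\mA\circ A_p=\Id{[\mA,A_p]}$, and since $A_p$ is a Lie ideal we have $[A_p,\mA]\subseteq A_p$, so $[A_p,\mA]$ together with $\mA[A_p,\mA]$ certainly lies inside the ideal generated by $[\mA,A_p]$; more precisely, the second clause of Lemma~\ref{lem-ideal}\ITEM1 already records that $(y,x,z)$ lies in the ideal of $\mA$ generated by $[B,\mA]$, i.e. in $\mA\circ A_p$. Hence $(x,a,y)\in \mA\circ A_p$.

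Finally, I would apply the hypothesis $\mA\circ A_p\subseteq A_{p+1}$ to conclude $(x,a,y)\in A_{p+1}$, which is the assertion. The proof is essentially a one-line chain of inclusions: $(x,a,y)\in \mA\circ A_p\subseteq A_{p+1}$. There is no real obstacle here; the only thing requiring a moment's care is the bookkeeping that "$(x,a,y)$ lies in the ideal generated by $[A_p,\mA]$" is precisely the content of Lemma~\ref{lem-ideal}\ITEM1 (so that one is not re-proving it), and the trivial remark that the hypothesis of the corollary — a descending family with $\mA\circ A_p\subseteq A_{p+1}$ — makes every $A_p$ an ideal, hence a Lie ideal, so that Lemma~\ref{lem-ideal}\ITEM1 is applicable with $B=A_p$.
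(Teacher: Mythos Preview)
Your proposal is correct and follows essentially the same route as the paper: apply Lemma~\ref{lem-ideal}\ITEM1 with $B=A_p$ to get $(x,a,y)\in [A_p,\mA]+\mA[A_p,\mA]$, then observe this lies in $\mA\circ A_p\subseteq A_{p+1}$. The only cosmetic difference is that the paper invokes Lemma~\ref{lem-ideal}\ITEM2 to identify $[A_p,\mA]+\mA[A_p,\mA]$ \emph{exactly} with $\mA\circ A_p$, whereas you (equivalently) use the trivial inclusion of this space into the ideal generated by $[A_p,\mA]$; either way the argument is a one-line chain of inclusions.
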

 \begin{proof}
   By Lemma~\ref{lem-ideal}, we have
   $$(x,a,y) \in  [A_p,\mA] +\mA[A_p,\mA]=\mA\circ A_p\subseteq A_{p+1}.$$
   The proof is completed.
 \end{proof}

Let
\begin{equation}\label{def-cent}
\mA=A_1\supseteq A_2 \supseteq \cdots \supseteq A_\mm
\supseteq A_{\mm+1}=(0)
\end{equation}
be a central chain of ideals of~$\mA$.  And let~$H_i$  $(i\geq 1)$ be as in Definition~\ref{defi-fc}. When~$\mA$ is assosymmetric, we have the following analogs as those for associative algebras. Again, as the associativity does not hold, new techniques are necessary.

\begin{lem}\label{lem-ass-ap}
Let~$\mA$ be an assosymmetric algebra. Then we have $H_pA_q\subseteq A_{p+q-1}$, $A_qH_p\subseteq A_{p+q-1}$, $[H_p, A_q]\subseteq A_{p+q}$ and~$(H_p, A_q, \mA)\subseteq A_{p+q}$. In particular, we have~$H_pH_q\subseteq H_{p+q-1}$.
\end{lem}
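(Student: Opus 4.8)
The plan is to prove the four containments by a simultaneous induction on $p$, imitating the structure of the proof of Lemma~\ref{circ-pro} but using the assosymmetric identities — especially identity~\eqref{eq3.1.1} and Corollary~\ref{coro-asso-a} — in place of right commutativity. First I would record the base case $p=1$: here $H_1=\mA$, and $[\mA,A_q]\subseteq \mA\circ A_q\subseteq A_{q+1}$ by the central chain hypothesis, $\mA A_q\subseteq A_q\subseteq A_q$ and $A_q\mA\subseteq A_q$ since $A_q$ is an ideal (so $p+q-1=q$ works), and $(\mA,A_q,\mA)\subseteq A_{q+1}\subseteq A_{q}$ by Corollary~\ref{coro-asso-a}. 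All four statements hold. For the inductive step, assume $p\geq 2$ and that all four containments hold for smaller first index (and all $q$). By Lemma~\ref{com-id}'s assosymmetric analogue — which here is Lemma~\ref{lem-ideal}\ITEM2 applied with $A=H_{p-1}$, $B=\mA$ — we have $H_p=[H_{p-1},\mA]+\mA[H_{p-1},\mA]=[H_{p-1},\mA]+[H_{p-1},\mA]\mA$, so $H_p$ is spanned by elements of the form $[h,x]$ and $y[h,x]$ (equivalently $[h,x]y$) with $h\in H_{p-1}$, $x,y\in\mA$. It therefore suffices to verify each of the four containments on these two types of generators.

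For a generator of the form $h'=[h,x]$ with $h\in H_{p-1}$: to get $[h',a]\in A_{p+q}$ for $a\in A_q$, expand $[[h,x],a]=[[h,a],x]+[h,[x,a]]$ using the Jacobi identity; the first term lies in $[A_{p+q-1},\mA]$ (using $[h,a]\in H_{p-1}\circ A_q\subseteq A_{p-1+q}$, which is the $[H_{p-1},A_q]\subseteq A_{p-1+q}$ part of the induction at index $p-1$) hence in $A_{p+q}$, and the second lies in $[H_{p-1},A_{q+1}]\subseteq A_{p-1+(q+1)}=A_{p+q}$. To get $H_pA_q\subseteq A_{p+q-1}$ on such a generator, write $[h,x]a=(hx)a-(xh)a$ and use identity~\eqref{eq3.1.1} (or directly the associator $(h,x,a)$, which by Corollary~\ref{coro-asso-a} lies in $A_{(p-1)+1}=A_p\subseteq A_{p+q-1}$ once we observe $h\in H_{p-1}$, $A_{p-1}$) together with $hx,xh\in H_{p-1}\mA\subseteq A_{p-1}$... — more carefully, $[h,x]a$: since $[h,x]\in H_p$ and we are trying to prove $H_pA_q\subseteq A_{p+q-1}$, I would instead move the $a$ past using associators: $[h,x]a = h(xa)-x(ha)+(h,x,a)-(x,h,a)$, and note $(h,x,a)=(x,h,a)$ by assosymmetry so those cancel, leaving $h(xa)-x(ha)$ with $xa,ha\in\mA A_q$... this is getting delicate. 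The cleaner route for $H_pA_q$ is: $H_pA_q\subseteq (H_{p-1}\mA)A_q + ([H_{p-1},\mA])A_q$; handle $([H_{p-1},\mA])A_q$ via $[h,x]a=(h,x,a)-(h,a,x)+h[x,a]-[x,h]a$-type rearrangements reducing to associators $(H_{p-1},\mA,A_q)$ — which by Corollary~\ref{coro-asso-a} applied to the chain $\{H_i\}$ lie in $H_{p}$... no: Corollary~\ref{coro-asso-a} gives $(x,a,y)\in A_{p+1}$ for $a\in A_p$, so $(\mathrm{anything},a,\mathrm{anything})\in A_{p-1+q}$ for $a\in H_{p-1}A_q\subseteq A_{p-1+q-1}$ by the $p-1$ case of $H_{p-1}A_q$; I would set this up inductively so the associator terms always land in a strictly better filtration level.

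I expect the main obstacle to be exactly this bookkeeping in the $H_pA_q$ (and symmetric $A_qH_p$) statements: unlike the Novikov/bicommutative case there is no right-commutativity identity to swap factors cleanly, so every factor-rearrangement generates associators, and I must repeatedly invoke Corollary~\ref{coro-asso-a} (and the inductive hypotheses on $H_pA_q$ and $[H_p,A_q]$ at lower indices) to absorb them into $A_{p+q-1}$. The key technical point is the order of the induction: prove $[H_p,A_q]\subseteq A_{p+q}$ and $(H_p,A_q,\mA)\subseteq A_{p+q}$ from the already-known $H_pA_q\subseteq A_{p+q-1}$ at the same $p$ and from $[H_{p-1},A_\ast]$, $(H_{p-1},A_\ast,\ast)$ at index $p-1$, while proving $H_pA_q\subseteq A_{p+q-1}$ from $[H_{p-1},A_q]$, $H_{p-1}A_q$, $A_qH_{p-1}$ and the associator containments at index $p-1$. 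On the generator $y[h,x]$ one uses $y[h,x]=(y\cdot h'x\text{-stuff})$: write $y h' = (y,h',\cdot)$-free as $yh'$ with $h'=[h,x]\in H_p$ already known to satisfy the four statements (by the $[h,x]$-case just done), and then $y h'\in \mA H_p$; to show $(\mA H_p)A_q\subseteq A_{p+q-1}$ move $a$ inside via $(\mA H_p)A_q\subseteq \mA(H_pA_q)+(\mA,H_p,A_q)$, where $\mA(H_pA_q)\subseteq\mA A_{p+q-1}\subseteq A_{p+q-1}$ and the associator $(\mA,H_p,A_q)=(\mA,A_q,H_p)$-flip reduces, by Corollary~\ref{coro-asso-a} with $a\in A_q$, $p\rightsquigarrow q$... and $H_p\subseteq A_p$, to $(\mA,A_q,A_p)\subseteq A_{q+1}$, which lies in $A_{p+q-1}$ as soon as $p\geq 2$. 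Finally, $H_pH_q\subseteq H_{p+q-1}$ is the special case $A_i=H_i$, for which the hypothesis $\mA\circ A_i\subseteq A_{i+1}$ holds by definition of $H_i$; this gives the last sentence for free. I would present the argument as one induction with the four claims proved in the cyclic order $H_pA_q \Rightarrow A_qH_p \Rightarrow [H_p,A_q] \Rightarrow (H_p,A_q,\mA)$, each step citing the earlier ones and the index-$(p-1)$ versions.
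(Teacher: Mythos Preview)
Your inductive skeleton --- induction on $p$, reduction to generators $[h_{p-1},x]$ and $[h_{p-1},x]y$ of $H_p$ via Lemma~\ref{lem-ideal}\ITEM2, and handling $[[h_{p-1},x],a]$ by the Jacobi identity --- matches the paper. But two steps are not yet there. Your computation of $[h,x]a$ stalls: the cancellation $(h,x,a)=(x,h,a)$ gives $[h,x]a=h(xa)-x(ha)$, and each summand lies only in $H_{p-1}A_q\subseteq A_{p+q-2}$, one level too weak. You must rewrite once more, e.g.\ $h(xa)-x(ha)=[h,xa]+(x,a,h)+x[a,h]$, and then invoke the inductive associator hypothesis $(H_{p-1},A_q,\mA)\subseteq A_{p+q-1}$; the paper does this in one stroke via identity~\eqref{eq3.1.1}, which gives $[h_{p-1},x]a=-(h_{p-1},x,a)+h_{p-1}[x,a]+[h_{p-1}a,x]$. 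Your alternative route for $(\mA H_p)A_q$ via Corollary~\ref{coro-asso-a} is also insufficient: that corollary gives only $(\mA,A_q,\mA)\subseteq A_{q+1}$, and $A_{q+1}\subseteq A_{p+q-1}$ holds only when $p\le 2$, not ``as soon as $p\ge 2$''.

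The more serious gap is that you never explain how to obtain $([h_{p-1},x],a,w)\in A_{p+q}$, and this cannot be bypassed: both $H_pA_q$ and $[H_p,A_q]$ on generators of the second type $[h_{p-1},x]y$ reduce (via~\eqref{eq3.1.1} or the associator expansion) to exactly this associator on first-type generators. Attempting it through~\eqref{eq3.1.1} yields $([h_{p-1},x],a,w)=-[[h_{p-1},x],a]w+[h_{p-1},x][a,w]+[[h_{p-1},x]w,a]$, and the last bracket is $[H_p,A_q]$ evaluated on a \emph{second}-type generator --- circular. The paper breaks this circularity by a case split on $\chart(\mathbb F)$: if $\chart(\mathbb F)\neq 2,3$ the associator vanishes outright by~\eqref{eq3.1.4}; if $\chart(\mathbb F)=2$ identity~\eqref{eq3.1.2} rewrites it as $[h_{p-1},(x,a,w)]+[x,(h_{p-1},a,w)]\in [H_{p-1},A_{q+1}]+[\mA,A_{p+q-1}]\subseteq A_{p+q}$; if $\chart(\mathbb F)=3$ identity~\eqref{eq3.1.3} rewrites the troublesome term $[[h_{p-1},x]w,a]$ as $-[wa,[h_{p-1},x]]-[a[h_{p-1},x],w]$, both of which involve only first-type data. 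Without this characteristic-dependent step your induction does not close.
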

 \begin{proof}
Since~$H_p\subseteq A_p$, $A_{p+q}\subseteq A_{p+q-1}$ and~$A_qH_p\subseteq [A_q,H_p]+H_pA_q$, it suffices to prove~$H_pA_q\subseteq A_{p+q-1}$,  $[H_p, A_q]\subseteq A_{p+q}$ and~$(H_p, A_q, \mA)\subseteq A_{p+q}$.

   We use induction on~$p$ to prove these claims.  For~$p=1$, we
   have~$H_1A_q\subseteq A_{q}$,  $[H_1, A_q]\subseteq A_{q+1}$ and by Corollary~\ref{coro-asso-a}, we obtain
 $$(H_1, A_q, \mA)\subseteq (\mA, A_q, \mA) \subseteq \mA\circ A_q\subseteq A_{q+1}.$$
 Now we assume~$p\geq 2$. If~$h_p=[h_{p-1},x]$ for some~$h_{p-1}\in H_{p-1}$ and~$x\in \mA$, then for every~$a\in A_q$, by induction hypothesis,  we have
 \begin{align*}
   h_pa&=[h_{p-1},x]a&\\
   &\overset{\eqref{eq3.1.1}}{=}
-(h_{p-1},x,a)+h_{p-1}[x,a]+[h_{p-1}a,x]&\\
 &\in
(H_{p-1},A_q,\mA)+H_{p-1}A_{q+1}+[H_{p-1}A_q,\mA]&\\
&\subseteq
A_{p+q-1}.&
 \end{align*}
 By the Jacobi identity and induction hypothesis, we obtain
 $$[h_p,a]=[[h_{p-1},x],a]=[[h_{p-1},a],x]+[h_{p-1},[x,a]]
 \in [A_{p+q-1},\mA]+[H_{p-1},A_{q+1}]
\subseteq  A_{p+q}.$$

We continue to show~$([h_{p-1},x], a, w)\in A_{p+q}$ for every~$w\in \mA$. There are several cases to discuss depending on the characteristic of the field. If~$\mathsf{char}(F)\neq 2,3$, then by identity~\eqref{eq3.1.4}, we have~$([h_{p-1},x], a, w)=0\in A_{p+q}$.
 If~$\mathsf{char}(F)=2$, then by identity~\eqref{eq3.1.2} and by Corollary~\ref{coro-asso-a}, we have
 $$([h_{p-1},x], a, w)=[h_{p-1},(x,a,w)]+[x,(h_{p-1},a,w)]
 \in [H_{p-1},A_{q+1}]+[\mA, A_{p+q-1}]\subseteq  A_{p+q}.
 $$
  If~$\mathsf{char}(F)=3$, then by identities~\eqref{eq3.1.1} and~\eqref{eq3.1.3} and by the above reasoning, we have
\begin{align*}
  &([h_{p-1},x], a, w)&\\
  =&-[[h_{p-1},x], a]w+[h_{p-1},x][a,w]+[[h_{p-1},x]w,a]&\\
  =&-[[h_{p-1},x], a]w+[h_{p-1},x][a,w]-[wa,[h_{p-1},x]]-[a[h_{p-1},x],w]&\\
  \in &A_{p+q}+[h_{p-1},x]A_{q+1}+[A_{p+q-1}, \mA]
  \subseteq A_{p+q}.&
\end{align*}

Now we prove for the case when~$p\geq 2$ and~$h_p=[h_{p-1},x]y$ for some elements~$h_{p-1}\in H_{p-1}$ and~$x,y\in\mA$. By the above reasoning and by the right-symmetric identity, we have
$$h_pa=([h_{p-1},x],y,a)+[h_{p-1},x](ya)
=([h_{p-1},x],a,y)+[h_{p-1},x](ya)\in  A_{p+q-1}.$$
By identity~\eqref{eq3.1.1} and by the above reasoning, we obtain
\begin{align*}
  [h_p,a]=[[h_{p-1},x]y,a]&=([h_{p-1},x],a,y)+[[h_{p-1},x],a]y-[h_{p-1},x][a,y]&\\
  &\in A_{p+q}+[h_{p-1},x]A_{q+1}\subseteq A_{p+q}.&
\end{align*}
Finally, by the above reasoning and by induction hypothesis, for every~$w\in \mA$, we deduce
\begin{align*}
  ([h_{p-1},x]y,a,w)
  &\overset{\eqref{eq3.1.1}}{=}
  -[[h_{p-1},x]y, a]w+([h_{p-1},x]y)[a,w]+[([h_{p-1},x]y)w,a]&\\
  &\in [H_{p},A_q]\mA+H_pA_{q+1}+[H_p, A_q]\subseteq A_{p+q}.&
\end{align*}
The proof is completed.
 \end{proof}
 By Lemma~\ref{lem-ass-ap} and by a similar reasoning as the proof for Theorem~\ref{th-pro}, we immediately obtain the following description for assosymmetric algebras that generalizes the corresponding result of associative algebras.

 \begin{thm}\label{thm45}
   Let~$\mA$ be an assosymmetric algebra of finite class. Then $\mA\circ \mA$ is nilpotent of nilpotent index less or equal to the class of~$\mA$.
 \end{thm}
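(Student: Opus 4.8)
The plan is to mirror the proof of Theorem~\ref{th-pro}, replacing the role of Lemmas~\ref{circ-pro} and~\ref{lem-id} with Lemma~\ref{lem-ass-ap}, which already packages all the inclusions we need for assosymmetric algebras. Concretely, Lemma~\ref{lem-ass-ap} applied to the lower central chain~\eqref{lower-cent} (which is a central chain of ideals, so the hypotheses are met with $A_i=H_i$) gives $H_pH_q\subseteq H_{p+q-1}$ for all $p,q\ge 1$. This is the analogue of the first assertion of Theorem~\ref{th-pro}, and it is stated outright as the last sentence of Lemma~\ref{lem-ass-ap}, so strictly speaking the first half of the work is already done; I would just cite it.

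The remaining task is to deduce nilpotency of $\mA\circ\mA$ from $H_pH_q\subseteq H_{p+q-1}$. First I would observe that $\mA\circ\mA=\Id{[\mA,\mA]}=H_2$ by the definition of $H_2$ in Definition~\ref{defi-fc}. Then I would prove by induction on $m$ that $(H_2)^m\subseteq H_{m+1}$, exactly as in the proof of Theorem~\ref{th-pro}: the base case $m=1$ is trivial, and for $m\ge 2$ one writes $(H_2)^m=\sum_{1\le i\le m-1}(H_2)^i(H_2)^{m-i}\subseteq\sum_{1\le i\le m-1}H_{i+1}H_{m-i+1}\subseteq\sum_{1\le i\le m-1}H_{(i+1)+(m-i+1)-1}=H_{m+1}$, using the induction hypothesis and Lemma~\ref{lem-ass-ap}. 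Finally, since $\mA$ is of finite class, say of class $n-1$ so that $H_n=(0)$, taking $m=n-1$ gives $(H_2)^{n-1}=(0)$, i.e. $\mA\circ\mA=H_2$ is nilpotent of index at most $n-1$, which is the class of $\mA$.

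There is no real obstacle here: all the hard work — the associator identities in characteristics $2$ and $3$, and the delicate case analysis showing $H_pA_q\subseteq A_{p+q-1}$, $[H_p,A_q]\subseteq A_{p+q}$ and $(H_p,A_q,\mA)\subseteq A_{p+q}$ — has already been carried out in Lemma~\ref{lem-ass-ap}. If I had to name a point requiring care, it would be making sure that the lower central chain of a finite-class algebra really is a central chain in the sense of~\eqref{cent-cond} (so that Lemma~\ref{lem-ass-ap} applies with $A_i=H_i$); but this is immediate from $H_{i+1}=H_i\circ\mA=\mA\circ H_i$ and the earlier remark that $H_i\subseteq A_i$ for any central chain. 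Hence the theorem follows almost entirely by invoking Lemma~\ref{lem-ass-ap} together with the nilpotency bookkeeping argument from Theorem~\ref{th-pro}, and I would present it in exactly that two-step form.
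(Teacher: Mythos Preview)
Your proposal is correct and follows exactly the approach the paper takes: the paper's proof of Theorem~\ref{thm45} is simply the sentence ``By Lemma~\ref{lem-ass-ap} and by a similar reasoning as the proof for Theorem~\ref{th-pro}, we immediately obtain\ldots'', and you have spelled out precisely that similar reasoning---citing $H_pH_q\subseteq H_{p+q-1}$ from Lemma~\ref{lem-ass-ap} and then rerunning the $(H_2)^m\subseteq H_{m+1}$ induction from Theorem~\ref{th-pro}.
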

 \subsection{Products of commutator ideals of assosymmetric algebras}
The aim of this subsection is to study products of commutator ideals of an arbitrary assosymmetric algebra~$\mA$ over a field~$F$ such that~$\mathsf{char}(F)\neq 2,3$. We shall prove that~$\Id{\mA_{[i]}}\Id{\mA_{[j]}}\subseteq \Id{\mA_{[i+j-1]}}$ if~$i$ is odd or $j$ is odd,  which generalizes the corresponding result~\cite[Corollary 1.4]{BJ} for associative algebras.
The main idea of this subsection comes from~\cite{BJ}. But since the associativity does not hold, we need new techniques.

For all~$x,y\in \mA$, we define
$$x\ast y=xy+yx \ \mbox{ and }\  [x,y]=xy-yx.$$
The main difference for the above mentioned result between associative algebras and assosymmetric algebras is the proof of the following lemma.
\begin{lem}\label{lem-46}
  Let~$\mA$ be an assosymmetric algebra over a field $F$  such that~$\mathsf{char}(F)\neq 2,3$. For every positive odd integer~$j$, we have~$[\Id{\mA_{[j]}},\mA]\subseteq \mA_{[j+1]}$. Moreover, we have~$[\Id{\mA_{[j]}},\mA_{[i]}]\subseteq \mA_{[i+j]}$.
\end{lem}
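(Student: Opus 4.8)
The plan is to first establish the base statement $[\Id{\mA_{[j]}},\mA]\subseteq \mA_{[j+1]}$ for odd $j$, and then deduce the more general inclusion $[\Id{\mA_{[j]}},\mA_{[\ii]}]\subseteq \mA_{[\ii+j]}$ by an easy induction on $\ii$ using the Jacobi identity. For the base statement, recall from Lemma~\ref{lem-mni}-type reasoning (or rather from the assosymmetric analogue, Lemma~\ref{lem-ideal}\,\ITEM2) that $\Id{\mA_{[j]}}=\mA_{[j]}+\mA\mA_{[j]}=\mA_{[j]}+\mA_{[j]}\mA$ since $\mA_{[j]}$ is a Lie ideal. Thus every element of $\Id{\mA_{[j]}}$ is a sum of an element of $\mA_{[j]}$ and terms of the form $x m$ with $x\in\mA$, $m\in\mA_{[j]}$. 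For the first type, $[\mA_{[j]},\mA]=\mA_{[j+1]}\subseteq\mA_{[j+1]}$ trivially. So the real content is to show $[xm,\mA]\subseteq\mA_{[j+1]}$ for $m\in\mA_{[j]}$, $x\in\mA$.

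To handle $[(xm)w - w(xm)$, I would expand using identity~\eqref{eq3.1.1} to convert associators into commutators: write $(xm)w = x(mw) + (x,m,w)$, and similarly $w(xm) = (wx)m - (w,x,m)$. The term $(x,m,w)$ lies in $\mA_{[j+1]}$ by Corollary~\ref{coro-asso-a} applied to the lower central chain (since $m\in\mA_{[j]}$ and $\Id{\mA_{[j]}}=H_j$ by the assosymmetric analogue of Theorem~\ref{prod-com-id}, with $H_j\circ\mA=H_{j+1}\supseteq \mA_{[j+1]}$); and $(w,x,m)=(x,w,m)$ which, by left symmetry, equals $(w,x,m)$, and applying~\eqref{eq3.1.1} in the slot containing $m$ again lands in $\mA_{[j+1]}$ together with terms $\mA[m,\mA]\subseteq\mA\mA_{[j+1]}\subseteq\Id{\mA_{[j+1]}}$. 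Here is where the oddness of $j$ must enter: the identity~\eqref{eq3.1.4}, $([x,y],z,w)=0$, lets me move commutators freely into associator slots, and I expect the key computation to follow the pattern of~\cite{BJ}: writing $\mA_{[j]}$ (for odd $j$) as spanned by $j$-fold left-normed brackets $[[\cdots[[a_1,a_2],a_3]\cdots],a_j]$, and using the parity to symmetrize, so that the ``$\ast$'' product $x\ast y = xy+yx$ can be pushed through — specifically, for odd $j$ one gets $\mA_{[j]}\ast\mA\subseteq\mA_{[j+1]}+\mA_{[j]}\mA \cap (\text{something controllable})$, exploiting that $a\ast b$ interacts with an odd bracket to produce an even bracket modulo lower-order error terms that vanish by~\eqref{eq3.1.4}.

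The hard part will be precisely this parity bookkeeping: tracking that the ``error'' terms generated when commuting $x$ past $m$ and reassociating — terms of the shape $\mA_{[j]}\mA$, $\mA\mA_{[j]}$, and associators $(\cdot,\mA_{[j]},\cdot)$ — all collapse into $\mA_{[j+1]}$ rather than merely into $\Id{\mA_{[j+1]}}$, and that this collapse genuinely requires $j$ odd (for even $j$ the symmetrization fails, which is why the final theorem only claims the result when $i$ or $j$ is odd). I would organize the computation so that \eqref{eq3.1.1} is applied to peel one variable off $xm$ at a time, \eqref{eq3.1.4} is invoked to kill the ``$([\cdot,\cdot],\cdot,\cdot)$'' associators, and the odd-bracket structure of $\mA_{[j]}$ is used at the end to rewrite a symmetric combination $xm+mx$ as a bracket. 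Once $[\Id{\mA_{[j]}},\mA]\subseteq\mA_{[j+1]}$ is in hand, the extension to $[\Id{\mA_{[j]}},\mA_{[\ii]}]\subseteq\mA_{[\ii+j]}$ is routine: induct on $\ii$, and for the step write $[\,\cdot\,,[a,c]] = [[\,\cdot\,,a],c] - [[\,\cdot\,,c],a]$ via Jacobi, noting that $[\Id{\mA_{[j]}},a]\subseteq\mA_{[j+1]}\subseteq\Id{\mA_{[j+1]}}$ and that $j+1$ is even but $\mA_{[j+1]}$ is still a Lie ideal, so the induction hypothesis with the roles reorganized (or a parallel statement for $\Id{\mA_{[j+1]}}$) closes the loop.
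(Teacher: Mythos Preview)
Your outline has the right scaffolding --- reduce to $[xm,v]$ with $m\in\mA_{[j]}$, split off $x\ast m$, deduce the second claim by Jacobi --- but the heart of the argument is missing, and the ``peeling'' plan you describe does not close. Since every $m\in\mA_{[j]}$ (for $j\ge 2$) is a sum of commutators, identity~\eqref{eq3.1.4} already kills every associator with $m$ in a slot, and one obtains $[xm,v]=[x,v]\,m+x\,[m,v]$ immediately. Neither summand lies in $\mA_{[j+1]}$ on its own (only in $\Id{\mA_{[j+1]}}$), and nothing in your sketch explains why their \emph{sum} does. Your appeal to Corollary~\ref{coro-asso-a} lands only in an ideal, never in the bare commutator space $\mA_{[j+1]}$; there is no assosymmetric analogue of Theorem~\ref{prod-com-id} available to invoke; and $x\ast m$ by itself does not lie in anything controllable --- only its further bracket $[x\ast m,v]$ does, and proving \emph{that} is the entire content.

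What the paper actually does (following~\cite{BJ}) is a five-variable symmetry argument you have not articulated. Write $m=[y,[z,u]]$ with $u\in\mA_{[j-2]}$ and set $F=[x\ast[y,[z,u]],v]$. Using~\eqref{eq3.1.4} together with the induction hypothesis at the odd integer $j-2$, one shows that modulo $\mA_{[j+1]}$ the expression $F$ changes sign under the transpositions $x\leftrightarrow y$ and $x\leftrightarrow v$; chaining these yields enough permutation relations to force $F(x,y,z,u,v)\equiv F(x,z,u,y,v)\equiv F(x,u,y,z,v)$. The Jacobi identity then gives $3F\equiv 0$, and $\mathsf{char}(\mathbb F)\neq 3$ finishes it. This is precisely where oddness of $j$ enters (via the step $j\mapsto j-2$), and it is absent from your plan. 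Separately, your handling of the second claim is tangled: no ``parallel statement for $\Id{\mA_{[j+1]}}$'' with $j+1$ even is needed --- once $[\Id{\mA_{[j]}},\mA]\subseteq\mA_{[j+1]}$ is known, Jacobi plus the Lie-theoretic inclusion $[\mA_{[a]},\mA_{[b]}]\subseteq\mA_{[a+b]}$ and induction on $i$ (with $j$ held fixed and odd) suffice.
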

\begin{proof} For all~$x,y,z\in\mA$, we have~$[x,[y,z]]=[[x,y ],z]-[[x,z],y]$. So the second claim follows immediately from the first one.
We use induction on~$j$ to prove the lemma.  For~$j=1$, the claim follows immediately by the definition of~$\mA_{[2]}$ and by the above reasoning if~$i\geq 2$. Now we assume that~$j$ is an odd integer such that~$j\geq 3$. For all~$x,y,z,u,v\in \mA$, it suffices to show~$[x[y,[z,u]], v]\in \mA_{[j+1]}$ if~$u\in \mA_{[j-2]}$.
By assumption, we have~$\mathsf{char}(F)\neq 2$, so we have
$$x[y,[z,u]]=(1/2)([x,[y,[z,u]]]+x\ast[y,[z,u]]).$$
So in order to show~$[x[y,[z,u]],v]\in A_{[j+1]}$, it suffices to prove~$[x\ast[y,[z,u]],v]\in A_{[j+1]}$. The idea of the proof is to show that~$[x\ast[y,[z,u]],v]$ is sort of skew symmetric. More precisely, we shall prove that, if one of~$x,y,z,u,v$ lies in~$\mA_{[j-2]}$ then
$$[x\ast[y,[z,u]], v]\equiv [x\ast[z,[u,y]], v]\equiv [x\ast[u,[y,z]], v] \mod A_{[j+1]}.$$
Since~$\mA$ is an assosymmetric algebra, by identity~\eqref{eq3.1.4}, we have
$$(x,y,[z,u])=(x,[z,u],y)=([z,u],x,y)=0=(xy)[z,u]-x(y[z,u])=(x[z,u])y-x([z,u]y),$$ and thus
\begin{align*}
&x\ast[y,[z,u]]+y\ast[x,[z,u]]&\\
=&x(y[z,u]-[z,u]y)+(y[z,u]-[z,u]y)x
+y(x[z,u]-[z,u]x)+(x[z,u]-[z,u]x)y&\\
=&(xy)[z,u]-x([z,u]y)+y([z,u]x)-[z,u](yx)&\\
&+(yx)[z,u]-y([z,u]x)+x([z,u]y)-[z,u](xy)&\\
=&(xy+yx)[z,u]-[z,u](xy+yx)&\\
=&[x\ast y, [z,u]].&
\end{align*}

Now we assume that one of~$x,y,z,u,v$ lies in~$\mA_{[j-2]}$.
Since~$[\mA_{[i]}, \mA_{[t]}]\subseteq \mA_{[i+t]}$,
 by induction hypothesis, we obtain that
 $[[x\ast y, [z,u]],v] $ lies in~$A_{[j+1]}$,
and thus we deduce
\begin{equation}\label{chan-12}
  [x\ast[y,[z,u]],v]\equiv -[y\ast[x,[z,u]],v] \mod A_{[j+1]}.
\end{equation}
Similarly,  we have
\begin{align*}
&[x\ast[y,[z,u]],v]+[v\ast[y,[z,u]],x]&\\
=& (x[y,[z,u]]+[y,[z,u]]x)v-v(x[y,[z,u]]+[y,[z,u]]x)&\\
&+(v[y,[z,u]]+[y,[z,u]]v)x-x(v[y,[z,u]]+[y,[z,u]]v)&\\
=& x([y,[z,u]]v)+[y,[z,u]](xv)-(vx)[y,[z,u]]-v([y,[z,u]]x)&\\
&+v([y,[z,u]]x)+[y,[z,u]](vx)-(xv)[y,[z,u]]-x([y,[z,u]]v)&\\
=&  [y,[z,u]](x\ast v)-(x\ast v)[y,[z,u]] &\\
= &-[x\ast v,[y,[z,u]]].&
\end{align*}
Again,  since one of~$x,y,z,u,v$ lies in~$\mA_{[j-2]}$, by induction hypothesis,
we easily obtain that~$[x\ast v,[y,[z,u]]]$ lies in~$A_{[j+1]}$, and thus we deduce
\begin{equation}\label{chan-15}
[x\ast[y,[z,u]],v]\equiv -[v\ast[y,[z,u]],x] \mod A_{[j+1]}.
\end{equation}
On the other hand,  by the Jacobi identity and by  identity~\eqref{chan-12}, we have
\begin{align*}
  [x\ast[y,[z,u]],v]&=-[x\ast[z,[u,y]],v]-[x\ast[u,[y,z]],v]&\\
  &\equiv [z\ast[x,[u,y]],v]+[u\ast[x,[y,z]],v] \ \mod \mA_{[j+1]};&
\end{align*}
Interchanging~$x$ and~$y$ in the above equation, we obtain
$$ [y\ast[x,[z,u]],v]\equiv [z\ast[y,[u,x]],v]+[u\ast[y,[x,z]],v] \ \mod \mA_{[j+1]}. $$
So by the above two Equations and by the Jacobi identity, we deduce
\begin{align*}
  &2[x\ast[y,[z,u]],v]&\\
  \equiv &[x\ast[y,[z,u]],v]-[y\ast[x,[z,u]],v]&\\
  \equiv & [z\ast[x,[u,y]],v]+[u\ast[x,[y,z]],v]
  -[z\ast[y,[u,x]],v]-[u\ast[y,[x,z]],v]&\\
  \equiv &  [z\ast[u,[x,y]],v]-[u\ast[z,[x,y]],v]&\\
    \equiv &  2[z\ast[u,[x,y]],v] \ \mod \mA_{[j+1]}.&
\end{align*}
Since~$\mathsf{char}(F)\neq 2$, we obtain
\begin{equation}\label{chan-1234}
  [x\ast[y,[z,u]],v]\equiv [z\ast[u,[x,y]],v] \mod \mA_{[j+1]}.
\end{equation}
Therefore,   in the vector space~$\mA/\mA_{[j+1]} $, we have
\begin{equation}\label{chan-25}
  [x\ast[y,[z,u]],v]
\overset{\eqref{chan-12}}{\equiv}  -[y\ast[x,[z,u]],v]
\overset{\eqref{chan-15}}{\equiv}  [v\ast[x,[z,u]],y]
\overset{\eqref{chan-12}}{\equiv} - [x\ast[v,[z,u]],y],
\end{equation}
\begin{equation}\label{chan-35}
[x\ast[y,[z,u]],v]
\overset{\eqref{chan-1234}}{\equiv}  [z\ast[u,[x,y]],v]
\overset{\eqref{chan-15}}{\equiv}  -[v\ast[u,[x,y]],z]
\overset{\eqref{chan-1234}}{\equiv}  [x\ast[y,[v,u]],z],
\end{equation}
and thus
\begin{equation}\label{chan-45}
 [x\ast[y,[z,u]],v]=-[x\ast[y,[u,z]],v]\overset{\eqref{chan-35}}{\equiv} [x\ast[y,[v,z]],u]\equiv -[x\ast[y,[z,v]],u].
 \end{equation}
 Therefore, we deduce
 \begin{align*}
    [x\ast[y,[z,u]],v]
    &\overset{\eqref{chan-25}}{\equiv} - [x\ast[v,[z,u]],y]
    \overset{\eqref{chan-45}}{\equiv} [x\ast[v,[z,y]],u]&\\
    &\overset{\eqref{chan-25}}{\equiv} - [x\ast[u,[z,y]],v]
       \equiv [x\ast[u,[y,z]],v] \mod \mA_{[j+1]}.&
 \end{align*}
 It follows that
 $$[x\ast[y,[z,u]],v] \equiv [x\ast[u,[y,z]],v] \equiv [x\ast[z,[u,y]],v]  \mod \mA_{[j+1]}. $$
Finally, since~$\mA$ is Lie-admissible, we obtain
$$3[x\ast[y,[z,u]],v] \equiv
 [x\ast[y,[z,u]],v]+[x\ast[u,[y,z]],v] +[x\ast[z,[u,y]],v]
 \equiv 0 \mod \mA_{[j+1]}.$$
Since~$\mathsf{char}(F)\neq 3$,  we have~$[x\ast[y,[z,u]],v] \in \mA_{[j+1]}$. The proof is completed.
\end{proof}

We conclude the article with the main result of this subsection, which generalizes the corresponding property of associative algebras.
\begin{thm}\label{cp-ass}
Let~$\mA$ be an assosymmetric algebra. Then we have~$\Id{\mA_{[i]}} \Id{\mA_{[j]}}\subseteq \Id{\mA_{[i+j-1]}}$ if~$i$ or $j$ is odd.
\end{thm}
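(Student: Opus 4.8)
The plan is to reduce Theorem~\ref{cp-ass} to the key technical Lemma~\ref{lem-46} together with the description of commutator ideals in an assosymmetric algebra furnished by Lemma~\ref{lem-ideal}~\ITEM2, which states $\Id{\mA_{[i]}}=\mA_{[i]}+\mA\mA_{[i]}=\mA_{[i]}+\mA_{[i]}\mA$. By symmetry of the hypothesis it suffices to treat the case where $j$ is odd; then $\Id{\mA_{[i]}}\Id{\mA_{[j]}}$ is spanned by products of an element of $\mA_{[i]}+\mA\mA_{[i]}$ with an element of $\mA_{[j]}+\mA\mA_{[j]}$, so I would split into the four types of generators and show each product lands in $\Id{\mA_{[i+j-1]}}$.

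First I would record the base/easy cases. For a product $c_i c_j$ with $c_i\in\mA_{[i]}$, $c_j\in\mA_{[j]}$: write $c_ic_j=[c_i,c_j]+c_jc_i$; the bracket lies in $\mA_{[i+j]}\subseteq\Id{\mA_{[i+j-1]}}$ by the Jacobi-identity argument opening Lemma~\ref{lem-46}, so $c_ic_j\in\Id{\mA_{[i+j-1]}}$ iff $c_jc_i$ is, and I can freely swap the two factors. Since $j$ is odd, $c_j$ itself is a bracket $c_j=[a,d]$ with $d\in\mA_{[j-1]}$, and I would like to rewrite $c_ic_j=c_i[a,d]$ using the associator identities. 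The governing fact is Lemma~\ref{lem-ideal}~\ITEM1: for a Lie ideal $B$, the associator $(y,x,z)$ with $x\in B$ lies in $\mA[B,\mA]+[B,\mA]\subseteq\Id{[B,\mA]}$. Applying this with $B=\mA_{[j-1]}$ (a Lie ideal) and with $d\in B$, every associator of the form $(-,d,-)$ — equivalently, by the assosymmetric identities, $(d,-,-)$ or $(-,-,d)$ — lies in $\Id{\mA_{[j]}}$, hence in $\Id{\mA_{[i+j-1]}}$ whenever combined with one more factor from $\mA_{[i]}$ or $\mA$; more precisely $(x,d,y)\in[\mA_{[j-1]},\mA]+\mA[\mA_{[j-1]},\mA]=\mA\circ\mA_{[j-1]}\subseteq\Id{\mA_{[j]}}$. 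This lets me move parentheses around at the cost of terms already in $\Id{\mA_{[i+j-1]}}$.

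Next, the four generator types. (i) $c_i\cdot c_j$: handled above by swapping and, where needed, by expanding $c_ic_j$ with \eqref{eq3.1.1} to trade it for $-[c_i,c_j]c_{?}$-type terms plus associators $(-,c_j,-)$, all of which lie in $\Id{\mA_{[i+j-1]}}$ using $[c_i,c_j]\in\mA_{[i+j]}$ and Lemma~\ref{lem-46} applied to the odd ideal $\mA_{[j]}$. (ii) $(xc_i)c_j$: use right symmetry / identity \eqref{eq3.1.1} to write $(xc_i)c_j=(x,c_i,c_j)+x(c_ic_j)$; the associator $(x,c_i,c_j)=(x,c_j,c_i)$ is of type $(-,c_j,-)$ with $c_j\in\mA_{[j]}$ a Lie ideal, hence in $\mA\circ\mA_{[j]}\subseteq\Id{\mA_{[j+1]}}\subseteq\Id{\mA_{[i+j-1]}}$ (as $i\ge1$, and if $i=1$ this is exactly $\Id{\mA_{[j]}}\cdot$nothing — here one checks $j+1\ge i+j-1$), and $x(c_ic_j)\in\mA\Id{\mA_{[i+j-1]}}\subseteq\Id{\mA_{[i+j-1]}}$ by type (i). (iii) $c_i(xc_j)$: this is the crucial case; here I invoke Lemma~\ref{lem-46} directly. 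Since $j$ is odd, $xc_j\in\mA\mA_{[j]}\subseteq\Id{\mA_{[j]}}$, and Lemma~\ref{lem-46} gives $[\Id{\mA_{[j]}},\mA_{[i]}]\subseteq\mA_{[i+j]}$, hence $[\Id{\mA_{[j]}},\mA]\subseteq\mA_{[j+1]}$ and more generally $[c_i,\Id{\mA_{[j]}}]\subseteq\mA_{[i+j]}\subseteq\Id{\mA_{[i+j-1]}}$; so $c_i(xc_j)\equiv(xc_j)c_i\pmod{\Id{\mA_{[i+j-1]}}}$, reducing to type (ii) with the roles of the two ideal-elements interchanged (now $c_i$ on the right, $xc_j$ on the left, peel $x$ via \eqref{eq3.1.1} and an associator $(-,c_j,-)$ again). (iv) $(xc_i)(yc_j)$: combine (ii) and (iii): first $(xc_i)(yc_j)=\big((xc_i)y\big)c_j+(xc_i,y,c_j)$; the associator is type $(-,c_j,-)$ hence negligible, and $(xc_i)y\in\mA\mA_{[i]}\cdot\mA\subseteq$ by Lemma~\ref{lem-ideal}~\ITEM2 again an element of $\Id{\mA_{[i]}}$-shape times $c_j$, i.e.\ reduces to type (i)/(ii).

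The main obstacle is case (iii), the product $c_i(xc_j)$ where the extra factor $x\in\mA$ sits \emph{between} the two commutator elements: associativity fails, so I cannot simply reassociate $c_i(xc_j)=(c_ix)c_j$, and the associator correction $(c_i,x,c_j)$ must be controlled — this is precisely where the symmetry relations of Lemma~\ref{lem-46} (built from \eqref{eq3.1.4} and the Jacobi identity, valid only in characteristic $\neq 2,3$) are essential, and where the oddness of $j$ is used in an irreplaceable way. I would also need to be slightly careful with the edge cases $i=1$ or $j=1$ so that the index $i+j-1$ is meaningful and the containments $\mA_{[i+j]}\subseteq\Id{\mA_{[i+j-1]}}$, $\Id{\mA_{[j+1]}}\subseteq\Id{\mA_{[i+j-1]}}$ genuinely hold; for $i=1$ the statement is trivial and for $j=1$ it is the $j=1$ base case of Lemma~\ref{lem-46}, so these cause no real difficulty.
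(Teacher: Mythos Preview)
The overall architecture is right and you correctly single out Lemma~\ref{lem-46} as the engine, but there is a genuine gap in how you control the associators in cases (ii) and (iv). You bound $(x,c_i,c_j)$ and $(xc_i,y,c_j)$ only as elements of $\mA\circ\mA_{[j]}=\Id{\mA_{[j+1]}}$ via Lemma~\ref{lem-ideal}\ITEM1, and then assert $\Id{\mA_{[j+1]}}\subseteq\Id{\mA_{[i+j-1]}}$; but that containment is equivalent to $i\le 2$ and simply fails for every $i\ge 3$. The same defect is hidden in your sketch of case (i), where ``associators $(-,c_j,-)$'' are declared to lie in $\Id{\mA_{[i+j-1]}}$ without a valid reason, and your parenthetical ``here one checks $j+1\ge i+j-1$'' is false in general.

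The missing tool is identity~\eqref{eq3.1.4}: in characteristic $\ne 2,3$ one has $([a,b],z,w)=0$, and since the associator in an assosymmetric algebra is symmetric in all three arguments, \emph{any} associator with one entry in $\mA_{[2]}$ vanishes identically. Hence for $i,j\ge 2$ every associator involving $c_i$ or $c_j$ is zero, not merely small. This is precisely what the paper uses: writing $\Id{\mA_{[i]}}=\mA_{[i]}+\mA\mA_{[i]}$ and $\Id{\mA_{[j]}}=\mA_{[j]}+\mA_{[j]}\mA$ (note the asymmetric choice), the vanishing associators let one reassociate freely, so the whole product collapses to $\mA_{[i]}\mA_{[j]}$ up to outer multiplications by $\mA$. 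The remaining core case is the one you gesture at in (i): for $c_i=[x,y]$ with $y\in\mA_{[i-1]}$ and $z\in\mA_{[j]}$, identity~\eqref{eq3.1.1} gives $[x,y]z=-(x,y,z)+x[y,z]+[xz,y]$; the associator is $0$ by~\eqref{eq3.1.4}, $x[y,z]\in\mA\mA_{[i+j-1]}$, and $[xz,y]\in[\Id{\mA_{[j]}},\mA_{[i-1]}]\subseteq\mA_{[i+j-1]}$ by Lemma~\ref{lem-46}. Once you invoke~\eqref{eq3.1.4} directly, your four-case split becomes unnecessary and the proof goes through.
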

\begin{proof}
 If~$i=1$ or $j=1$, then  clearly we have~$\Id{\mA_{[i]}} \Id{\mA_{[j]}}\subseteq \Id{\mA_{[i+j-1]}}$.
Now we assume~$i\geq 2$ and~$j\geq 2$.  Then by Lemma~\ref{lem-ideal}\ITEM2 and by identity~\eqref{eq3.1.4}, we have
\begin{align*}
  \Id{\mA_{[i]}} \Id{\mA_{[j]}}
  &= (\mA_{[i]}+\mA\mA_{[i]})(\mA_{[j]}+\mA_{[j]}\mA)&\\
  &\subseteq \mA_{[i]}\mA_{[j]}+\mA(\mA_{[i]}\mA_{[j]})
+(\mA_{[i]}\mA_{[j]})\mA+\mA(\mA_{[i]}\mA_{[j]})\mA.&
\end{align*}
So it suffices to show~$\mA_{[i]}\mA_{[j]}\subseteq \Id{\mA_{[i+j-1]}}$ if one of~$i$ and~$j$ is odd.  Since
$$\mA_{[i]}\mA_{[j]}
\subseteq [\mA_{[i]}, \mA_{[j]}]+\mA_{[j]}\mA_{[i]}
\subseteq  \mA_{[i+j]}+\mA_{[j]}\mA_{[i]},$$
we may assume that~$j$ is odd and thus we may assume~$j\geq 3$ and~$i\geq 2$. For all~$x\in \mA$, $y\in \mA_{[i-1]}$ and $z\in \mA_{[j]}$, by identity~\eqref{eq3.1.4} and by Lemma~\ref{lem-46}, we have
\begin{align*}
  [x,y]z&=(xy)z-(yx)z=x(yz)-y(xz)=x(yz)-x(zy)+x(zy)-y(xz)&\\
  &=x(yz)-x(zy)+(xz)y-y(xz)=x[y,z]+[xz,y]&\\
   &\in \mA \mA_{[i+j-1]}+[\Id{\mA_{[j]}},\mA_{[i-1]}]\subseteq \Id{\mA_{[i+j-1]}}.&
\end{align*}
 The proof is completed.
\end{proof}

We also note that if~$i$ and $j$ are even then $\Id{\mA_{[i]}}\Id{\mA_{[j]}} \nsubseteqq \Id{\mA_{[i+j-1]}}$ in general for associative algebras~\cite{pro-com}. Since associative algebras are assosymmetric algebras,  we know that
if~$i$ and $j$ are even then $\Id{\mA_{[i]}}\Id{\mA_{[j]}} \nsubseteqq \Id{\mA_{[i+j-1]}}$ in general for assosymmetirc algebras.

 \subsection*{Acknowledgment}  The authors thank V. Zhelyabin for valuable  comments,  in particular, the authors learn Example~\ref{ex-gra} from V. Zhelyabin.

\newcommand{\noopsort}[1]{}

\end{document}